\numberwithin{equation}{section}
\newtheorem{Theorem}{Theorem}[subsection]
\newtheorem{Lemma}[Theorem]{Lemma}
\newtheorem{Claim}[]{Claim}
\newtheorem{Proposition}[Theorem]{Proposition}
\newtheorem{Remark}[Theorem]{Remark}
\newtheorem{Example}[Theorem]{Example}
\newtheorem{Definition}[Theorem]{Definition}
\newtheorem{Notation}[Theorem]{Notation}
\newtheorem{TheoremS}{Theorem}[section]
\newtheorem{LemmaS}[TheoremS]{Lemma}
\newtheorem{CorollaryS}[TheoremS]{Corollary}
\newtheorem{RemarkS}[TheoremS]{Remark}
\def\dist{\operatorname{dist}}
\def\RR{{\mathbb R}} 
\def\ZZ{{\mathbb Z}} 
\def\a{{\mathbf a}}
\def\b{{\mathbf b}}
\def\e{{\mathbf e}}
\def\1{{\mathbf 1}}
\def\x{{\mathbf x}}
\def\v{{\mathbf v}}
\def\y{{\mathbf y}}
\def\O{{\mathcal{O}}}
\def\E{{\mathcal E}}
\def\A{{\mathcal A}}
\def\B{{\mathcal B}}
\def\D{{\Delta}}
\def\G{{\Gamma}}
\def\Lc{{\mathcal L}}
\begin{document}

\title{\L ojasiewicz inequalities \\in a certain class of smooth functions}

\author{Hà Minh Lam}
\address{Institute of Mathematics, Vietnam Academy of Science and Technology, 18 Hoang Quoc Viet, 10307 Hanoi, Vietnam}
\email{hmlam@math.ac.vn}

\author{Hà Huy Vui}
\address{Thang Long Institute of Mathematics and Applied Sciences, Nguyem Xuan Yem Road, Hanoi,
Vietnam}
\email{hhvui@math.ac.vn}
\dedicatory{Dedicated to the memory of professor Stanislaw \L ojasiewicz}
\maketitle

\begin{abstract}
Let $f$ be a germ of a smooth function at the orirgin in $\RR^n.$ We show that if $f$ is Kouchnirenko's nondegenerate and satisfies the so called Kamimoto--Nose condition then it admits the \L ojasiewicz inequalities. We compute the \L ojasiewicz exponents for some special cases. In particular, if $f$ is a germ of a smooth convex Kouchnirenko's nondegenerate function and satisfies the Kamimoto--Nose condition, then all its \L ojasiewicz exponents can be expressed very simply in terms of its Newton polyhedron. 
\end{abstract}

\section{Introduction}
The \L ojasiewicz inequalities were born to solve a problem in analysis \cite{Ho}, \cite{L1}, \cite{L2}. Later, they found many applications and generated many results in other fields (see, for instance, \cite{BDLM}, \cite{BL}, \cite{CKT}, \cite{Gw}, \cite{Ha}, \cite{KMP}, \cite{Kuo}).

In general, \L ojasiewicz inequalities are false for smooth functions.

In this paper we study the \L ojasiewicz inequalities in a special class of smooth functions. The existence of \L ojasiewicz inequalities in some classes of non-analytic functions has been studied in a number of works (see, for instance, \cite{BM}, \cite{Ch}, \cite{F}, \cite{Kur}, \cite{Ha2},  \cite{HNP}).
In \cite{BM}, Biestone and Milman observed that one can use resolution of singularities to obtain the \L ojasiewicz inequalities. These authors introduced a class of smooth functions whose elements have resolution of singularities and so they admit the \L ojasiewicz inequalities. This class of functions satisfies relatively strict conditions, both algebraically and analytically.

To extend Varchenko's result on the asymptotics of oscillatory integrals with an analytic non-degenerate (in the Kouchnirenko sense) phase function, Kamimoto and Nose introduced a condition, under which a smooth non-degenerate function admits a toric resolution of singularities \cite{KN}. 

In this paper, we study the \L ojasiewicz inequalities for non-degenerate smooth fuctions, satisfying the Kamimoto and Nose condition. The paper is organized as follows. In \S 2, we recall  the definition of the Kamimoto and Nose condition (the {\it KN-condition}, for short) and toric modification. In \S 3, we show that if a function $f$ is non-degenerate and satisfies the KN-condition, then $f$ admits the following \L ojasiewicz inequalities:

\noindent
{\bf (\L$_0$)} If $g(\x)$ is a smooth function such that $f^{-1}(0) \subset g^{-1}(0)$ for all $\x$ closed to the origin, then there exist $c,\varepsilon >0$ and a positive number $\alpha$, depending only on $f$, such that 
$$|f(\x)| \ge c|g(\x)|^\alpha, \  \ \forall ||\x || < \varepsilon;$$

\noindent
{\bf (\L$_1$)} There exist $c, \varepsilon >0$ and $\theta \in (0, 1)$ such that 
$$||\nabla f(\x)|| \ge c |f(\x)|^\theta,  \ \ \forall ||\x || <\varepsilon;$$

\noindent
{\bf (\L$_2$)} There exist $c, \varepsilon >0$ and $\mathcal{L}>0$ such that
$$|f(\x)| \ge c \dist (\x, f^{-1}(0))^\mathcal{L} ,  \ \ \forall ||\x ||<\varepsilon,$$
where $\dist (\x, f^{-1}(0))$ denotes the distance from $\x$ to the set $f^{-1}(0).$\\

Put
$$\alpha(f) := \inf\{\alpha \ :\  \mbox{(\L}_0\mbox{) holds} \},\ \ $$
$$\theta(f) := \inf\{\theta \ :\  \mbox{(\L$_1$) holds} \},$$
$$\mathcal{L}(f) := \inf\{\mathcal{L} \ :\  \mbox{(\L$_2$) holds} \}.\ \ $$

 The section 4 will be devoted to the computation of the \L ojasiewicz exponents $\alpha(f),$ $\theta(f)$ and $\Lc (f).$ The subsection 4.1 contains some lemmas. The exponent $\theta(f)$ will be computed in \S 4.2 under condition of partially convenientness of $f$ (Definition \ref{Def4.2.1}). In assuming that $f$ is non-negative, the exponents $\alpha(f)$ and $\Lc (f)$ will be computed respectively in \S 4.3 and \S 4.4. In \S 4.5, we assume that $f$ satisfies both conditions of partially convenientness and non-negativity and show that, in this very special case, all the exponents $\alpha(f)$, $\theta(f)$ and $\Lc (f)$ can be expressed very clearly in terms of the Newton polyhedron of $f$.
 
In Section 5, we consider the case when $f$ is a convex function. It turns out that a convex function is partially convenient and non-negative. Hence, results of \S 4.5 hold true for convex functions.

\section{KN-condition and toric resolution of singularities}

We denote by $\ZZ$ the set of integer numbers, $\ZZ_+$ and $\RR_+$ respectively the subset of all nonnegative number in $\ZZ$ and $\RR.$\\

For $\alpha=(\alpha_1, ..., \alpha_n) \in \ZZ^n_+$, we define
$$\partial^\alpha := \left(\frac{\partial}{\partial x_1}\right)^{\alpha_1}\cdots\left(\frac{\partial}{\partial x_n}\right)^{\alpha_n}$$
$$\alpha ! = \alpha_1 !\cdots\alpha_n ! \qquad \qquad \qquad $$
$$|\alpha| = \alpha_1 + ...+\alpha_n \qquad \qquad \quad$$

We denote by $\langle .,. \rangle$ the standard scalar product in $\RR^n.$

For $(\a, l) \in \RR^n \times \RR$, put
$$H(\a, l) := \{\x \in \RR^n\ | \ \langle \a, \x \rangle =l\},$$
$$H^+(\a, l) := \{\x \in \RR^n \ | \ \langle \a, \x \rangle \ge l\}.$$
Let $f$ be a smooth function on an open neighbourhood of the origin in $\RR^n$ and 
$$T^\infty_0 (f) = \sum^\infty_{|\alpha|=0} c_\alpha \x^\alpha$$
be the Taylor series of $f$ at $0 \in \RR^n.$

The {\it Newton polyhedron} of $f$, denoted by  $\G_+ (f)$, is the convex hull of the set
$$\bigcup_{|\alpha|=0}^\infty \{ \alpha + \RR^n_+ \ |\ c_\alpha \neq 0\}.$$

 A pair $(\a,l) \in \RR^n \times \RR$ is {\it valid} for $\G_+(f)$ if $\G_+(f) \subset H^+ (\a,l).$ Any set of the form $\gamma = \G_+(f) \cap H(\a,l),$ where $(\a, l)$ is a valid pair, will be called a {\it face} of $\G_+(f)$, defined by $(\a, l).$
 
 \begin{Definition}\label{Def2.1} \emph{(\cite{KN}) $\ $ We say that $f$ satisfies the} \it{KN-condition}, \emph{if for any face $\gamma$ of $\G_+(f)$ and any pair $(\a, l)$ defining $\gamma$, $\a = (a_1,...,a_n),$ the limit}
$$\lim_{t \rightarrow 0} \frac{f(t^{a_1}x_1, \cdots, t^{a_n}x_n)}{t^l}$$
\emph{exists for all $\x$ belonging a neighbourhood of the origin.}
\end{Definition}

\begin{Proposition}\label{Prop2.1.2}\emph{(\cite[Proposition 6.3, Lemmas 6.4, 6.5]{KN})} The following two statements are equivalent
\begin{itemize}
\item[(i)] $f$ satisfies the KN-condition;
\item[(ii)] There exists a finite subset $S \subset \G_+(f) \cap \ZZ_+^n$ such that $f$ belongs to the ideal $(\x^\alpha \ :\ \alpha \in S) \E_n,$ generated by monomials $\x^\alpha$, $\forall \alpha \in S$, in the ring $\E_n$ of germs of smooth functions at the origin of $\RR^n.$
\end{itemize}
\end{Proposition}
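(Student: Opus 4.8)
The plan is to prove the two implications separately. For $(ii)\Rightarrow(i)$, suppose $f = \sum_{\alpha\in S} \x^\alpha h_\alpha$ with $h_\alpha \in \E_n$ and $S \subset \G_+(f)\cap\ZZ_+^n$ finite. Given a face $\gamma$ of $\G_+(f)$ with defining pair $(\a,l)$, I would substitute $x_i \mapsto t^{a_i}x_i$ directly into this finite sum: each term becomes $t^{\langle\a,\alpha\rangle}\x^\alpha\, h_\alpha(t^{a_1}x_1,\dots,t^{a_n}x_n)$, and since $\alpha\in\G_+(f)\subset H^+(\a,l)$ we have $\langle\a,\alpha\rangle \ge l$. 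Thus $f(t^{a_1}x_1,\dots,t^{a_n}x_n)/t^l$ is a finite sum of terms $t^{\langle\a,\alpha\rangle-l}\x^\alpha h_\alpha(t^{a_1}x_1,\dots)$ with nonnegative exponents, each of which has a limit as $t\to 0^+$ (namely $0$ if $\langle\a,\alpha\rangle>l$, and $\x^\alpha h_\alpha(0)$ if $\langle\a,\alpha\rangle=l$, using continuity of $h_\alpha$). This direction is essentially immediate once one observes that membership in a monomial ideal is preserved under the toric substitution.

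**The harder direction.** For $(i)\Rightarrow(ii)$, the content is to extract the monomial ideal structure from the mere existence of all the quasihomogeneous limits. The natural candidate for $S$ is the (finite) set of vertices of $\G_+(f)$, or more robustly the set of all lattice points of $\G_+(f)$ that are minimal under the coordinatewise partial order together with the vertices — in any case a finite generating set $S$ for the monomial ideal $I = (\x^\alpha : \alpha\in\G_+(f)\cap\ZZ_+^n)\E_n$. One then wants to show $f\in I\E_n$. The strategy would be to induct on the "distance" of the Taylor support of $f$ to $\G_+(f)$: using the KN-condition applied to the various faces (in particular to vertices, where $(\a,l)$ can be chosen so that $H(\a,l)$ meets $\G_+(f)$ only at that vertex), the limit condition forces the "lowest" non-ideal part of the Taylor expansion to vanish, allowing one to factor out the appropriate monomials. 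Concretely, I expect one uses a Taylor-expansion-with-remainder argument: write $f = \sum_{\alpha\in S}\x^\alpha\cdot(\text{polynomial part}) + R$, control the remainder via the integral form of Taylor's theorem, and feed each surviving obstruction into a suitably chosen weight $\a$ so that the corresponding limit fails unless the obstruction is already in $I$.

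**Where the real obstacle lies.** The main difficulty is precisely this converse: a smooth function is not determined by its Taylor series, so one cannot argue purely formally on coefficients. The delicate point is to show that the flat part (and the "between the support and $\G_+(f)$" part) of $f$ genuinely lies in the monomial ideal $I\E_n$ in the ring of germs — this is where one needs a quantitative version of the limit condition, or a clever choice of finitely many weight vectors $\a$ whose associated faces "cover" all the generators of $\G_+(f)$, plus a division/Hadamard-lemma argument in $\E_n$ to actually perform the factorization. Since Proposition \ref{Prop2.1.2} is cited from \cite[Proposition 6.3, Lemmas 6.4, 6.5]{KN}, the cleanest route for this paper is to invoke those results directly; if one wanted a self-contained proof, the heart of it would be the Hadamard-type lemma showing that a germ whose relevant quasihomogeneous initial parts all vanish on the appropriate faces is divisible (in $\E_n$) by the generators of $\G_+(f)$, carried out by induction on the number of generators not yet factored out.
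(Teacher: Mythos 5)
The paper offers no proof of this proposition: it is imported verbatim from Kamimoto--Nose (\cite[Proposition 6.3, Lemmas 6.4, 6.5]{KN}), so there is no in-text argument to compare yours against. Your direction (ii)$\Rightarrow$(i) is correct and essentially complete: writing $f=\sum_{\alpha\in S}\x^\alpha h_\alpha$ and substituting $x_i\mapsto t^{a_i}x_i$ gives a finite sum of terms of order $t^{\langle\a,\alpha\rangle-l}$ with nonnegative exponent, and continuity of the $h_\alpha$ yields the limit. The only small imprecision is the value of the limit when $\langle\a,\alpha\rangle=l$: since some coordinates of $\a$ may vanish, $h_\alpha(t^{a_1}x_1,\dots,t^{a_n}x_n)$ tends to $h_\alpha$ evaluated at the point whose $i$-th coordinate is $x_i$ when $a_i=0$ and $0$ when $a_i>0$, not necessarily to $h_\alpha(0)$; this does not affect existence of the limit.

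For (i)$\Rightarrow$(ii) you have a strategy, not a proof. The step you label ``a division/Hadamard-lemma argument in $\E_n$'' is precisely the nontrivial content of the cited lemmas of \cite{KN}: for smooth non-analytic germs the presence of flat functions means that vanishing of the quasihomogeneous initial parts cannot be read off the Taylor coefficients, and one must construct the quotient germs explicitly (integral Taylor remainders, iterated over finitely many weight vectors covering the normal fan of $\G_+(f)$). You correctly locate this obstacle but do not overcome it. Since the paper itself treats the proposition as a black box and simply cites \cite{KN}, your closing move --- invoke the reference --- coincides with what the authors do, and for the purposes of this paper that is the appropriate resolution; but as a self-contained proof the hard direction remains open in your write-up.
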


\begin{Definition}\emph{We say that $f$ is {\it convenient}, if $\G_+(f)$ intersects all the coordinate axes.}
\end{Definition}

\begin{Remark} \label{Rem2.4}\emph{(\cite{KN})} 
\begin{itemize}
\item[(i)] If $f$ is convenient, then $f$ satisfies the KN-condition.
\item[(ii)] If $\gamma$ is a compact face of $\G_+(f)$ then the limit in Definition \ref{Def2.1} always exists and for all $\x$ close to the origin, this limit equals
$$f_\gamma= \sum_{\alpha \in \gamma \cap \ZZ^n_+} c_\alpha \x^\alpha.$$
\item[(iii)] If $f$ is an analytic function or if $f$ belongs to the Denjoy-Carleman quasi-analytic classes, then $f$ satisfies the KN-condition.
\end{itemize}
\end{Remark}

\begin{Example} \emph{(see \cite{KN}) Let $f_k= x_1^2 x_2^2 + x_1^k e^{-1/x_2^2}.$ Then $f_k$ satisfies the KN-condition if and only if $k \ge 2.$}
\end{Example}



\subsection{Toric modification}


Let $\a \in (\RR^n_+)^*,$ where $(\RR^n)^*$ is the dual space of $\RR^n.$

We put 
$$l(\a):= \min\{ \langle \a, \x \rangle \ :\ \x \in \G_+(f)\}$$
and
$$\gamma(\a):= \{ \x \in \G_+(f)\ :\ \langle \a, \x \rangle = l(\a) \}.$$
The vectors $\a, \a'$ from $(\RR^n)^*_+$ are said to be equivalent if $\gamma(\a) =\gamma(\a').$ Then, the closure of an equivalent class can be identified with a rational polyhedral cone $\RR_+\a^1 +...+\RR_+ \a^s,$ where $\a^i \in \ZZ^n_+,$ $i=1, ..., s.$ 

The set $\Sigma_0$ of all the closures of equivalent classes gives us a decomposition of $(\RR^n)^*_+$ on closed convex rational polyhedral cones.

A cone $\RR_+\a^1 +...+\RR_+ \a^s$ is said to be an {\it $s$-simplex} if $\a^1, ..., \a^s$ are linearly independent over $\RR.$ An $s$-simplex cone is {\it  unimodular} if there exist $n-s$ vectors $\a^{s+1}, ..., \a^n \in \ZZ^n_+$ such that det$(a^i_j) =  \pm 1,$ where $\a^i =(a^i_1, ..., a^i_n), i=1, ..., n.$ 

According to \cite{Var} there exists a simplicial subdivision $\Sigma$ of $\Sigma_0$, where all $s$-simplices of $\Sigma$ are unimodular. Let $\Sigma^{(n)}$ denotes the set of all $n$-simplices of $\Sigma.$ For each $\sigma \in \Sigma^{(n)},$ we have $\sigma= \RR_+ \a^1(\sigma) + ...+ \RR_+ \a^n(\sigma),$ $\a^i(\sigma) \in \ZZ^n_+,$ $i=1,...,n.$

Let $\RR^n(\sigma)$ be a copy of $\RR^n.$ We denote $\y_\sigma =(y_{\sigma, 1}, ..., y_{\sigma, n})$ the coordinates in $\RR^n(\sigma).$ We define the map 
$$\pi_\sigma: \RR^n(\sigma) \rightarrow \RR^n \qquad \qquad \qquad \qquad\qquad $$
$$x_k = \prod_{j=1}^n y_{\sigma,j}^{a^j_k(\sigma)},\ k=1,\cdots,n.$$

Let $X_{\G_+(f)} := \displaystyle \bigsqcup_\sigma \RR^n(\sigma)$ be the disjoint union of $\RR^n(\sigma),$ where $\sigma$ runs over the set $\Sigma^{(n)}$. We define an equivalent relation on $X_{\G_+(f)}$ as follows:

If $\y_\sigma \in \RR^n(\sigma)$ and $\y_{\sigma'} \in \RR^n(\sigma ')$, then $\y_\sigma \sim \y_{\sigma'}$  if and only if $\pi_{\sigma'}^{-1} \circ \pi_\sigma(\y_\sigma) = \y_{\sigma'}.$\\

Put 
$$Y_{\G_+(f)} := X_{\G_+(f)} \slash \sim, \mbox{ the factor space}.$$
Then, according to \cite{Var}, we have
\begin{itemize}
\item $Y_{\G_+(f)}$ is a non-singular $n$-dimensional algebraic manifold.
\item The map $\pi: Y_{\G_+(f)} \rightarrow \RR^n$ defined by
$$\pi(\y) = \pi_{\sigma} (\y_\sigma), \mbox{ for } \y \in \RR^n(\sigma)$$
is a proper analytic map.
\end{itemize}

A pair $(Y_{\G_+(f)}, \pi)$ is called the {\it toric modification associated with ${\G_+(f)}$}.

For any compact face of $\G_+(f),$ let $f_\gamma(\x)$ denote the polynomial 
$$f_\gamma(\x) = \sum_{\alpha \in \gamma} c_\alpha \x^\alpha.$$

\begin{Definition}\emph{\cite{Kou} We say that $f$ is} {\it non-degenerate} \emph{if for any compact face $\gamma$ of $\G_+(f)$, the system }
$$ \frac{\partial f_\gamma (\x)}{\partial x_j} = 0,\ \ j=1, \cdots, n$$
\emph{has no solutions in $(\RR \setminus 0)^n.$}
\end{Definition}

Let $I \subset \{1, \cdots, n\}$. Put
$$T_I(\RR^n):=\{\x= (x_1, ..., x_n) \in \RR^n: x_i=0, \ \forall i \in I\}$$
and
$$T_I^* (\RR^n) :=\{ \x=(x_1, \cdots, x_n) \in \RR^n \ :\ x_i=0 \mbox{ if and only if } i \in I\}.$$

\begin{Theorem}\emph{\cite[Theorem 8.10]{KN}} \label{Theo2.8} Let $f$ be a non-degenerate function satisfying the KN-condition. Let $\sigma \in \Sigma^{(n)}, \ \sigma = \RR_+\a^1(\sigma) +...+ \RR_+\a^n(\sigma).$ Then there are a neighbourhood $U$ of $0 \in \RR^n$ and a smooth function $f_\sigma(\y_\sigma)$ on the set $\pi_\sigma^{-1}(U)$ such that, for all $\y_\sigma \in \pi_\sigma^{-1}(U),$ we have
$$\displaystyle f \circ \pi_\sigma(\y_\sigma) = \left(\prod_{j=1}^n y_{\sigma,j}^{l(\a^j(\sigma))}\right) f_\sigma(\y_\sigma),$$
where $l(\a^j(\sigma)) = \min \{\langle \x, \a^j(\sigma)\rangle : \x \in \G_+(f)\}, j=1,...,n,$ and the function $f_\sigma$ satisfies the following conditions:
\begin{enumerate}[label=\upshape(\theTheorem.\arabic*), ref= \theTheorem.\arabic*]
\item\label{2.8.1} $f_\sigma (0) \ne 0;$
\item\label{2.8.2} If $I \subset \{1,...,n\}$ such that $\pi_\sigma (T^*_I(\RR^n(\sigma)))=0,$ then for any point $\b \in T^*_I(\RR^n(\sigma))$ with $f_\sigma(\b) =0,$ there exists $j \in \{1,...,n\} \setminus I$ such that $\frac{\partial f_\sigma (\b)}{\partial y_{\sigma,j}} \neq 0.$
\end{enumerate}
\end{Theorem}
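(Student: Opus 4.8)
The plan is to reduce everything to a local computation on each chart $\RR^n(\sigma)$ and to exploit Proposition \ref{Prop2.1.2}(ii), which says that $f \in (\x^\alpha : \alpha \in S)\E_n$ for a finite set $S \subset \G_+(f) \cap \ZZ^n_+$. First I would write $f = \sum_{\alpha \in S} \x^\alpha g_\alpha(\x)$ with $g_\alpha \in \E_n$, and pull this back along $\pi_\sigma$: since $\pi_\sigma$ is the monomial map $x_k = \prod_j y_{\sigma,j}^{a^j_k(\sigma)}$, we get $\x^\alpha \circ \pi_\sigma = \prod_j y_{\sigma,j}^{\langle \alpha, \a^j(\sigma)\rangle}$. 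Because $\a^j(\sigma) \in \ZZ^n_+$ and $\alpha \in \G_+(f)$, each exponent $\langle \alpha, \a^j(\sigma)\rangle \ge l(\a^j(\sigma))$, so every term is divisible by $\prod_j y_{\sigma,j}^{l(\a^j(\sigma))}$; factoring this out defines the candidate $f_\sigma(\y_\sigma) = \sum_{\alpha \in S}\bigl(\prod_j y_{\sigma,j}^{\langle\alpha,\a^j(\sigma)\rangle - l(\a^j(\sigma))}\bigr)\,(g_\alpha\circ\pi_\sigma)(\y_\sigma)$, which is smooth on $\pi_\sigma^{-1}(U)$ for a suitable neighbourhood $U$. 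This gives the factorization formula; the content of the theorem is then properties \eqref{2.8.1} and \eqref{2.8.2}.

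For \eqref{2.8.1}: at $\y_\sigma = 0$ only those $\alpha \in S$ with $\langle \alpha, \a^j(\sigma)\rangle = l(\a^j(\sigma))$ for all $j$ survive, i.e. $\alpha \in \gamma(\a^j(\sigma))$ for every $j$; since $\sigma$ is an $n$-dimensional simplex spanned by $\a^1(\sigma),\dots,\a^n(\sigma)$, the common face $\bigcap_j \gamma(\a^j(\sigma))$ is a single vertex $v$ of $\G_+(f)$. Thus $f_\sigma(0) = \sum_{\alpha \in S,\, \alpha = v} g_\alpha(0) = c_v \ne 0$ — but here one must be careful: the representation $f = \sum \x^\alpha g_\alpha$ is not canonical, so instead of reading off $c_v$ directly one should argue via the Taylor expansion, noting that $f_\sigma(0)$ equals the limit $\lim_{t\to 0} f(t^{a^1_1}x_1,\dots)/t^{l}$ restricted appropriately, which by the KN-condition exists and is the coefficient attached to the vertex $v$, hence nonzero because $v \in \G_+(f)$ forces $c_v \ne 0$. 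I would phrase this using the one-parameter substitutions hidden in $\pi_\sigma$ rather than the non-unique $g_\alpha$'s.

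For \eqref{2.8.2}, which I expect to be the main obstacle: fix $I$ with $\pi_\sigma(T^*_I(\RR^n(\sigma))) = \{0\}$ and $\b \in T^*_I$ with $f_\sigma(\b)=0$. The idea is that on the stratum $T^*_I$ the function $f_\sigma$ agrees (up to a nonvanishing monomial in the $y_{\sigma,j}$, $j \notin I$) with the pullback of a face polynomial $f_\gamma$ of $\G_+(f)$, where $\gamma$ is the face cut out by the cone spanned by $\{\a^j(\sigma) : j \in I\}$; this is exactly the compact-face situation of Remark \ref{Rem2.4}(ii), so $f_\gamma$ is an honest polynomial and non-degeneracy of $f$ applies to it. One then computes $\partial f_\sigma/\partial y_{\sigma,j}$ for $j \notin I$ at $\b$ by the chain rule through $\pi_\sigma$: writing $\partial/\partial y_{\sigma,j} = \sum_k (\partial x_k/\partial y_{\sigma,j})\,\partial/\partial x_k$ and using that the matrix $(a^i_k(\sigma))$ is unimodular hence invertible over $\ZZ$, a vanishing of all these derivatives at $\b$ would force all $\partial f_\gamma/\partial x_k$ to vanish at the corresponding point $\pi(\b\text{-lift}) \in (\RR\setminus 0)^n$, contradicting non-degeneracy. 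The delicate points are: (a) bookkeeping which monomial prefactor $\prod_{j} y_{\sigma,j}^{(\cdot)}$ is split off and checking it is nonzero on $T^*_I$ exactly off the locus $I$; (b) making sure the "face polynomial" extracted on $T^*_I$ genuinely has a zero in the torus $(\RR\setminus 0)^{n-|I|}$ coming from $\b$; and (c) handling the smooth (non-polynomial) tail $g_\alpha$ — here the KN-condition is what guarantees that the tail contributes only to faces that are not compact and therefore does not interfere with the face polynomial on the relevant stratum. This last point is the crux and is where I would spend the most care, essentially re-deriving the toric-resolution statement of \cite{KN} in the form stated.
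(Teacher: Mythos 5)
This statement is not proved in the paper at all: Theorem \ref{Theo2.8} is imported verbatim from Kamimoto--Nose (\cite[Theorem 8.10]{KN}) and used as a black box, so there is no in-paper proof to compare your attempt against. Judged on its own, your sketch follows the strategy one would expect from the cited source: the factorization and the identification $f_\sigma(0)=c_v$ with $v=\bigcap_j\gamma(\a^j(\sigma))$ a vertex of $\G_+(f)$ are correct (with the small precision that $c_v\neq 0$ because $v$ is a \emph{vertex}, hence one of the generating exponents of the Newton polyhedron, not merely because $v\in\G_+(f)$), and the reduction of \eqref{2.8.2} to non-degeneracy of the face polynomial $f_\gamma$, $\gamma=\bigcap_{i\in I}\gamma(\a^i(\sigma))$, via unimodularity of $\sigma$ is the right mechanism. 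The one place where your outline stops short of a proof is the point you yourself flag as the crux: you need to check that for $j\notin I$ the derivatives $\partial f_\sigma/\partial y_{\sigma,j}$ at $\b\in T^*_I$ see only the face part, i.e.\ that every term of the monomial-ideal representation with exponent $\alpha\notin\gamma$ pulls back to a monomial carrying a strictly positive power of some $y_{\sigma,i}$ with $i\in I$, so that both the term and its $y_{\sigma,j}$-derivatives ($j\notin I$) vanish on $T^*_I$; and that compactness of $\gamma$ (equivalent to $\pi_\sigma(T^*_I)=\{0\}$) together with the Euler relations for the quasi-homogeneous $f_\gamma$ lets you pass from the vanishing of the complementary derivatives to the vanishing of the full gradient of $f_\gamma$ at a torus point. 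These are exactly the computations carried out in \cite{KN}; your proposal names them but does not execute them, so it is an accurate roadmap rather than a complete proof.
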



\section{The existence of \L ojasiewicz's inequalities}

For $\sigma = \RR_+\a^1(\sigma) +...+ \RR_+\a^n(\sigma) \in \Sigma^{(n)},$ we put
$$l(\sigma) := \max\{l(\a^i(\sigma)), i=1,...,n\}$$
$$N(\sigma):= \sum_{i=1}^n l(\a^i(\sigma))\qquad \qquad \qquad$$
$$L:= \max\{l(\sigma)\ :\ \sigma \in \Sigma^{(n)}\}\quad$$
$$N:= \max\{N(\sigma)\ :\ \sigma \in \Sigma^{(n)}\}$$

\begin{TheoremS} \label{Theo3.1} Let $f$ be a smooth function on a neighbourhood of the origin in $\RR^n,$ $f(0)=0,$ $\nabla f(0)=0.$ Assume that $f$ is non-degenerate and satisfies the KN-condition. Then the following statements hold true:
\begin{itemize}
\item[(\L$_0$)] If $g$ is a smooth function such that $f^{-1}(0) \subset g^{-1}(0)$ for all points belonging a neighbourhood of the origin, then there exist $c,\varepsilon >0$ such that
$$|f(\x)| \ge c|g(\x)|^L\  \mbox{ if }\  ||\x || < \varepsilon.$$
\item[(\L$_1$)] There exist $c, \varepsilon >0$ such that 
$$||\nabla f(\x)|| \ge c |f(\x)|^{1- \frac{1}{N}} \ \mbox{ if }\ ||\x || <\varepsilon.$$
\item[(\L$_2$)] There exist $c, \varepsilon >0$ such that
$$|f(\x)| \ge c \dist (\x, f^{-1}(0))^N \quad \mbox{ if } \ ||\x ||<\varepsilon.$$
\end{itemize}
\end{TheoremS}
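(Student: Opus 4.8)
The whole strategy is to pull everything back by the toric modification $\pi\colon Y_{\G_+(f)}\to\RR^n$ and use the monomial normal form from Theorem \ref{Theo2.8} chart by chart. Fix $\sigma\in\Sigma^{(n)}$ with generators $\a^1(\sigma),\dots,\a^n(\sigma)$, write $l_j=l(\a^j(\sigma))$, and recall $f\circ\pi_\sigma(\y_\sigma)=\bigl(\prod_j y_{\sigma,j}^{l_j}\bigr)f_\sigma(\y_\sigma)$ with $f_\sigma(0)\neq 0$. Since $\pi_\sigma$ is proper and the charts cover a neighbourhood of $0$, and $\pi_\sigma^{-1}(0)$ is the union of the coordinate hyperplanes in each chart where the corresponding $l_j>0$, it suffices to prove each inequality on $Y_{\G_+(f)}$ near $\pi^{-1}(0)$, after which properness of $\pi$ transports the estimate back downstairs (this is the standard Bierstone--Milman mechanism mentioned in the introduction). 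I would first record this reduction as a short lemma: a Łojasiewicz-type inequality holds near $0\in\RR^n$ iff its pullback holds near $\pi^{-1}(0)$ in each chart, using that $\dist(\x,f^{-1}(0))$ and $|g(\x)|$, being continuous and vanishing appropriately, behave controllably under a proper analytic map, and that on the compact set $\pi^{-1}(\overline{B_\varepsilon})$ one has two-sided polynomial comparisons between $\|\x-\text{(something)}\|$ and the coordinates $\y_\sigma$.

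For (\L$_0$): on the chart $\RR^n(\sigma)$, $g\circ\pi_\sigma$ is a smooth function vanishing on $\pi_\sigma^{-1}(f^{-1}(0))\supset\{\prod_j y_{\sigma,j}^{l_j}=0\}$, i.e. on every coordinate hyperplane $\{y_{\sigma,j}=0\}$ with $l_j>0$. Hence $g\circ\pi_\sigma$ is divisible (in the smooth category, by Hadamard's lemma applied repeatedly) by $\prod_{j:l_j>0}y_{\sigma,j}$, so $|g\circ\pi_\sigma(\y_\sigma)|\le C\prod_{j:l_j>0}|y_{\sigma,j}|$ near the origin, while $|f\circ\pi_\sigma(\y_\sigma)|\ge c\prod_{j:l_j>0}|y_{\sigma,j}|^{l_j}$ since $|f_\sigma|$ is bounded below near $0$ by \ref{2.8.1} — wait, $f_\sigma$ need not be bounded below away from $0$, but we only need the estimate near $\pi^{-1}(0)$, shrinking $U$ so that $|f_\sigma|\ge c_0>0$ on $\pi_\sigma^{-1}(U)$, which is exactly what \ref{2.8.1} together with continuity gives after shrinking. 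Then, using $\prod_{j:l_j>0}|y_{\sigma,j}|^{l_j}\ge\bigl(\prod_{j:l_j>0}|y_{\sigma,j}|\bigr)^{L}$ for $|y_{\sigma,j}|\le 1$ (since $l_j\le L$ and the product of terms $<1$), we get $|f\circ\pi_\sigma|\ge c\,|g\circ\pi_\sigma|^{L}$, and the reduction lemma finishes it.

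For (\L$_1$): by the chain rule $\nabla_{\y_\sigma}(f\circ\pi_\sigma)=(D\pi_\sigma)^{T}(\nabla f)\circ\pi_\sigma$, and since $\pi_\sigma$ is a monomial map its Jacobian entries are monomials, so $\|\nabla_{\y_\sigma}(f\circ\pi_\sigma)(\y_\sigma)\|\le C\,M(\y_\sigma)\,\|(\nabla f)\circ\pi_\sigma(\y_\sigma)\|$ for a monomial weight $M$ that one checks is bounded by $\prod_j|y_{\sigma,j}|^{l_j-1}$ up to constants (here one uses $l(\a)\ge\langle\a,\v\rangle$ on $\G_+(f)$ and that the exponents appearing in $D\pi_\sigma$ reduce some $y_{\sigma,j}$-power by one). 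On the other hand, differentiating the normal form directly, at least one partial $\partial_{y_{\sigma,k}}(f\circ\pi_\sigma)$ equals $l_k\,y_{\sigma,k}^{l_k-1}\prod_{j\neq k}y_{\sigma,j}^{l_j}\,f_\sigma+\prod_j y_{\sigma,j}^{l_j}\,\partial_{y_{\sigma,k}}f_\sigma$; choosing $k$ to be an index where either $l_k>0$ and $y_{\sigma,k}$ is comparatively large, or, on the stratum where several $y_{\sigma,j}=0$, invoking condition \ref{2.8.2} to find a $k\notin I$ with $\partial_{y_{\sigma,k}}f_\sigma(\b)\neq 0$ so that the second term survives, one shows $\|\nabla_{\y_\sigma}(f\circ\pi_\sigma)\|\ge c\prod_{j}|y_{\sigma,j}|^{l_j}\big/\max_j|y_{\sigma,j}|$ near $\pi^{-1}(0)$, hence $\ge c\prod_j|y_{\sigma,j}|^{l_j}$ up to a bounded factor (boundedness of the chart). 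Comparing with $|f\circ\pi_\sigma|\le C\prod_j|y_{\sigma,j}|^{l_j}$ and, again, with $|f\circ\pi_\sigma|^{1-1/N}\le C\bigl(\prod_j|y_{\sigma,j}|^{l_j}\bigr)^{1-1/N}$, where the key numerical point is $\sum_j l_j=N(\sigma)\le N$ so that $\bigl(\prod_j|y_{\sigma,j}|^{l_j}\bigr)^{1-1/N}\ge\prod_j|y_{\sigma,j}|^{l_j}\cdot\max_j|y_{\sigma,j}|^{-(\sum l_j)/N}\ge\prod_j|y_{\sigma,j}|^{l_j}\cdot\max_j|y_{\sigma,j}|^{-1}$ for $|y_{\sigma,j}|\le1$, yields $\|\nabla f\circ\pi_\sigma\|\ge c|f\circ\pi_\sigma|^{1-1/N}$, again in the upstairs coordinates but with the transfer of $\|\nabla f\|$ itself requiring a touch of care because $(D\pi_\sigma)^{T}$ is only invertible off the coordinate hyperplanes; this is handled by the standard curve-selection / łojasiewicz argument: it suffices to verify the inequality along arcs, pull the arc up, and use that upstairs the gradient estimate holds. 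Finally (\L$_2$) follows from (\L$_1$) by the well-known implication (integrate $\|\nabla f\|\ge c|f|^{1-1/N}$ along the gradient flow, or invoke the gradient-inequality-implies-distance-inequality lemma): if $\|\nabla f(\x)\|\ge c|f(\x)|^{1-1/N}$ on $B_\varepsilon$ then $|f(\x)|\ge c'\dist(\x,f^{-1}(0))^{N}$, with the exponent $N$ being exactly $1/(1-(1-1/N))$.

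The main obstacle is the gradient lower bound in (\L$_1$) on the exceptional strata where two or more of the $y_{\sigma,j}$ vanish: there the leading monomial terms of all partials can vanish simultaneously, and one must genuinely use condition \ref{2.8.2} — that on the stratum $T^*_I$ mapping into $f^{-1}(0)$ the function $f_\sigma$ has a nonvanishing transverse derivative — to extract a surviving term, and then bookkeep the monomial weights from $D\pi_\sigma$ carefully enough to land the exponent $1-1/N$ rather than something weaker. Making the "it suffices to check along arcs / transfer through the proper map $\pi$" step precise for the gradient (which does not transform as nicely as the function) is the other delicate point, and I would isolate it as the technical lemma in \S 4.1 that the paper alludes to.
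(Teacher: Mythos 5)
Your overall architecture (pull back by the toric modification, argue chart by chart using Theorem \ref{Theo2.8}, cover the compact fibre $\pi^{-1}(0)$, push forward by properness) is the same as the paper's, but there is a genuine gap at the heart of the chart-by-chart step. You assert that after shrinking $U$ one has $|f_\sigma|\ge c_0>0$ on $\pi_\sigma^{-1}(U)$ "by \ref{2.8.1} together with continuity." This is false: \ref{2.8.1} only controls $f_\sigma$ at $0\in\RR^n(\sigma)$, while $\pi_\sigma^{-1}(0)$ contains entire coordinate strata $T^*_I(\RR^n(\sigma))$ on which $f_\sigma$ may vanish (the strict transform of $f^{-1}(0)$ meeting the exceptional set) --- this is precisely why condition \ref{2.8.2} exists. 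Shrinking $U$ downstairs does not shrink $\pi_\sigma^{-1}(U)$ to a neighbourhood of $0\in\RR^n(\sigma)$. At such points the lower bound $|f\circ\pi_\sigma|\ge c\prod_j|y_{\sigma,j}|^{l_j}$ fails, and with it your proofs of all three inequalities. The paper's essential device, which your proposal omits, is a further \emph{local} coordinate change at every point $\y^0\in\pi^{-1}(0)$: when $f_\sigma(\y^0)=0$, condition \ref{2.8.2} supplies a direction $j\notin I$ with $\partial f_\sigma/\partial y_{\sigma,j}\neq 0$, so $f_\sigma$ itself can be taken as a new coordinate, and $f\circ\pi$ becomes \emph{exactly} $\pm\y^\rho$ with $|\rho|\le N$, $\max_i\rho_i\le L$ (the paper's Claim 1). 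All three estimates are then read off from a pure monomial.

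Two further points. First, you treat the transfer of the gradient estimate as delicate and propose a curve-selection argument; this is both unnecessary and unavailable (there is no curve selection lemma for general smooth functions). The chain rule gives $\nabla F(\y)=A_{\pi_\sigma}(\y)\bigl(\nabla f(\pi_\sigma(\y))\bigr)$, hence $\|\nabla F\|\le M\|\nabla f\circ\pi_\sigma\|$ with $M=\sup\|A_{\pi_\sigma}\|<\infty$ on a compact chart neighbourhood; a lower bound on $\|\nabla F\|$ therefore yields a lower bound on $\|\nabla f\|$ with no invertibility of $D\pi_\sigma$ required. Second, your exponent bookkeeping in (\L$_1$) does not close: from $\|\nabla F\|\ge c\prod_j|y_j|^{l_j}/\max_j|y_j|$ one cannot deduce $\|\nabla F\|\ge c\bigl(\prod_j|y_j|^{l_j}\bigr)^{1-1/N}$, since $\bigl(\prod_j|y_j|^{l_j}\bigr)^{1/N}\ge\max_j|y_j|$ is false in general (take $n=2$, $l_1=l_2=1$, $y_1$ small, $y_2$ moderate); the inequality only works with $\min$ over the indices with $\rho_i>0$ in place of $\max$. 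The paper avoids this entirely via the convexity identity $\bigl(1-\tfrac{1}{|\rho|}\bigr)\rho=\sum_i\tfrac{\rho_i}{|\rho|}(\rho-\e_i)$, which gives $|\y^\rho|^{1-1/|\rho|}\le\tfrac{1}{|\rho|}\sum_i\bigl|\tfrac{\partial F}{\partial y_i}\bigr|$ directly. Your reduction of (\L$_2$) to (\L$_1$) by gradient flow is a legitimate alternative to the paper's direct distance estimate, but it cannot rescue the argument until the $f_\sigma$-vanishing issue above is repaired.
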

\begin{proof} We first prove the following claims:
\begin{Claim}\label{Claim1} Let $\pi: Y_{\G_+(f)} \rightarrow \RR^n$ be a toric modification  and $\y^0 \in \pi^{-1}(0).$ Then there exist a neighbourhood $V(\y^0)$ of $\y^0$ and a system of smooth local coordinates $\y=(y_1,...,y_n)$ centered at $\y^0$ such that
$$f \circ \pi(\y) = + y^{\rho_1}_1...y^{\rho_n}_n$$
or $$f \circ \pi(\y) = - y^{\rho_1}_1...y^{\rho_n}_n$$
where $\rho =(\rho_1, ..., \rho_n) \in \ZZ^n_+$ such that 
$$|\rho| \le N$$
and 
$$ \max\{\rho_i : i=1,...,n\} \le L.$$
\end{Claim}

\begin{proof}
The proof of Claim \ref{Claim1} follows from Theorem \ref{Theo2.8} and the definitions of $L$ and $N$.
\end{proof}

\begin{Claim}\label{Claim2} Let $\y^0 \in \pi^{-1}(0)$. Then there exist a neighbourhood $V(\y^0)$ of $\y^0$ such that the following statements hold:
\begin{itemize}
\item[(i)] If $g$ is a smooth function on a neighbourhood of the origin in $\RR^n$ and $f^{-1}(0) \subset g^{-1}(0)$ for all $\x$ belonging some neighbourhood of the origin, then there exists $c>0$ such that 
$$|f(\x)| \ge c|g(\x)|^L, \mbox{ if } \x\in \pi(V(\y^0)).$$  
\item[(ii)] There exists $c >0$ such that 
$$\nabla f(\x) \ge c|f(\x)|^{1- \frac{1}{N}}, \mbox{ if } \x\in \pi(V(\y^0)).$$
\item[(iii)] There exists $c >0$ such that
$$|f(\x)| \ge c \dist (\x, f^{-1}(0))^N, \mbox{ if } \x\in \pi(V(\y^0)).$$
\end{itemize}
\end{Claim}

\begin{proof}
By Claim \ref{Claim1}, there exist a neighbourhood $V(\y^0)$ of $\y^0$ and a system of local coordinates $\y=(y_1,...,y_n)$, centered at $\y^0$ such that
$$F(\y):= f \circ \pi_\sigma(\y) = \pm y^{\rho_1}_1...y^{\rho_n}_n.$$



\noindent
{\it Proof of (i): } Let $f^{-1}(0) \subset g^{-1}(0)$ and 
$$G(\y) = g \circ \pi(\y).$$
We can choose $V(\y^0)$ such that 
$$\{\y \in V(\y^0): F(\y)=0\} \subset \{\y \in V(\y^0): G(\y)=0\}$$
and 
$$G(\y) = \left(\prod_{i \in I} y_i\right) G_1(\y),$$
where $I:=\{ i : \rho_i >0\}$ and $G_1(\y)$ is a smooth function on $V(\y^0).$ Then, we have for some $c>0$ 
$$|F(\y)| \ge c|G(\y)|^{\rho_{\max}}\ \mbox{ if } \y \in V(\y^0),$$
where $\rho_{\max} = \max\{\rho_i: i=1,...,n\}.$

Hence, $|f(\x)| \ge c |g(\x)|^{\rho_{\max}}$ for $\x \in \pi_(V(\y^0)).$

Since $\rho_{\max} \le L,$ (i) is proved.\\

\noindent
{\it Proof of (ii):} Let, as above $F(\y)= f\circ \pi(\y) = \pm \y^\rho,$ where $\rho= (\rho_1,..., \rho_n).$ For $i=1,...,n$, put
$$\beta^i:= \begin{cases} (\rho_1,...,\rho_{i-1}, \rho_i-1, \rho_{i+1},...,\rho_n) \mbox{ if } \rho_i >0\\ 0 \mbox{ otherwise}\end{cases}$$
$$N(\rho):= \sum_{i=1}^n \rho_i.$$
Then we have
$$\displaystyle \left( 1- \frac{1}{N(\rho)}
\right) \rho = \sum_{i=1}^n \frac{\rho_i}{N(\rho)}\beta^i$$
This implies that 
$$\displaystyle |F(\y)|^{1- \frac{1}{N(\rho)}} =|\y^\rho|^{1-\frac{1}{N(\rho)}} \le \sum_{i=1}^n \frac{\rho_i}{N(\rho)}|\y|^{\beta^i} =\frac{1}{N(\rho)} \sum_{i=1}^n \Big|\frac{\partial F}{\partial y_i}\Big|$$
for all $\y \in V(\y^0).$

Hence, for some $c_0 >0$, we have
\begin{equation} \label{3.1} ||\nabla F(\y)|| \ge c_0 |F(\y)|^{1-\frac{1}{N(\rho)}}, \mbox{ if } \y \in V(\y^0).
\end{equation}

Let $\y^0 \in \RR^n(\sigma)$ with $\sigma \in \Sigma^{(n)}.$ Let $A_{\pi_\sigma}(\y): \RR^n \rightarrow \RR^n$ denote the linear operator given by the Jacobian matrix of $\pi_\sigma$ at $\y \in V(\y^0).$

Clearly, we can assume that $V(\y^0)$ is a compact neighbourhood. Then,
$$M:=\sup \{||A_{\pi_\sigma}(\y)|| : \y \in V(\y^0)\} < \infty.$$
Let $\nabla f(\pi_\sigma(\y))$ be the vector $\nabla f(\x)$ with $\x = \pi_\sigma (\y).$ We have
$$\nabla F(\y)= A_{\pi_\sigma}(\y) (\nabla f(\pi_\sigma(\y))),$$
and
$$||\nabla F(\y)|| \le ||A_{\pi_\sigma}(\y)||.||\nabla f(\pi_\sigma(\y))|| \le M||\nabla f(\pi_\sigma(\y))||.$$
This, together with (\ref{3.1}) imply that
$$||\nabla f(\x)|| \ge \frac{c_0}{M}|f(\x)|^{1-\frac{1}{N(\rho)}}, \ \mbox{ if } \x \in \pi(V(\y^0))$$
and (ii) is proved.\\

\noindent
{\it Proof of (iii):} One can see that $V(\y^0)$ can be chosen such that for any $\y \in V(\y^0)$ and any pair $\y ', \y '' \in V(\y^0),$ we have  
$$F(\y) = f\circ \pi(\y) =\pm \y^\rho,$$
\begin{equation} \label{3.2}|F(\y)|= |\y^\rho| \ge c_0\dist (\y, F^{-1}(0))^{|\rho|} 
\end{equation}
and
\begin{equation}\label{3.3}
c_1||\pi_\sigma(\y')-\pi_\sigma(\y'')|| \le ||\y'-\y''||
\end{equation}
with some $c_0>0$ and $c_1>0.$

Let $\y \in V(\y^0)$ and $\y^* \in F^{-1}(0) \cap V(\y^0)$ be the point such that 
$$\dist(\y,F^{-1}(0)) = ||\y-\y^*||.$$ Then, it follows from (\ref{3.2}) and (\ref{3.3}) that
$$|f(\x)| \ge  c\dist (\x, f^{-1}(0))^{|\rho|}, \ \forall \x \in \pi_\sigma(V(\y^0))$$
with $c=c_0c_1^{\rho}.$ Now, since $|\rho| \le N,$ (iii), and therefore, Claim \ref{Claim2} is proved.
\end{proof}

Now, we will show that Theorem \ref{Theo3.1} follows from Claim \ref{Claim2}.

Firstly, since $\pi^{-1}(0)$ is a compact subset of $Y_{\G_+(f)}$, there exist the points $\y^1, ..., \y^s \in \pi^{-1}(0)$ and the neighbourhoods $V(\y^j), \y^j \in V(\y^j), j= 1,...,s$ and $\displaystyle \pi^{-1}(0) \subset \bigcup_{i=1}^s V(\y^i)$ such that, by Claim \ref{Claim2}:
$$|f(\x)| \ge c |g(\x)|^{L}$$
$$||\nabla f(\x)|| \ge c |f(\x)|^{1 - \frac{1}{N}}$$
and 
$$ |f(\x)| \ge c \dist (\x, f^{-1}(0))^{N},$$
for all $\displaystyle \x \in \pi\left(\bigcup_{i=1}^sV(\y^i)\right).$

Since $\displaystyle \pi\left(\bigcup_{i=1}^sV(\y^i)\right)$ contains a neighbourhood of the origin in $\RR^n,$ Theorem \ref{Theo3.1} is proved. 
\end{proof}

\begin{CorollaryS}\label{Cor3.2} Let $f$ be a smooth function on a neighbourhood of the origin in $\RR^n,$ $f(0) =0$ and $\nabla f(0) =0.$ Assume that $f$ is convenient and non-degenerate. Then $f$ admits inequalities (\L$_0$), (\L$_1$) and (\L$_2$).
\end{CorollaryS}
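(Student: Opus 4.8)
The plan is to derive Corollary \ref{Cor3.2} directly from Theorem \ref{Theo3.1} together with part (i) of Remark \ref{Rem2.4}. The key observation is that the hypotheses of the corollary are strictly stronger than (or rather, they imply) the hypotheses of the theorem: by Remark \ref{Rem2.4}(i), if $f$ is convenient, then $f$ automatically satisfies the KN-condition. So once convenientness is assumed, the only remaining hypothesis to carry over is non-degeneracy, which is assumed directly. The conditions $f(0)=0$ and $\nabla f(0)=0$ are shared by both statements, so there is nothing to check there.

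Concretely, I would argue as follows. First I would note that $f$, being convenient, satisfies the KN-condition by Remark \ref{Rem2.4}(i). Combined with the assumed non-degeneracy of $f$ and the conditions $f(0)=0$, $\nabla f(0)=0$, this means $f$ satisfies all the hypotheses of Theorem \ref{Theo3.1}. Applying that theorem, we conclude immediately that $f$ admits the inequalities (\L$_0$), (\L$_1$) and (\L$_2$) (indeed with the explicit exponents $L$, $1-\frac1N$ and $N$ respectively, though for the corollary we only need existence of \emph{some} admissible exponents, i.e.\ $\alpha(f)$, $\theta(f)$ and $\Lc(f)$ are all finite, with $\theta(f)<1$).

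I do not anticipate any real obstacle here: this corollary is essentially a specialization, and the entire mathematical content has already been established in Theorem \ref{Theo3.1} and Remark \ref{Rem2.4}(i). The only thing to be mildly careful about is making sure the logical direction of Remark \ref{Rem2.4}(i) is used correctly — it is ``convenient $\Rightarrow$ KN-condition'', which is exactly what we need (we are strengthening a hypothesis, not weakening a conclusion). One could add a remark that, since $L$ and $N$ are defined purely in terms of the Newton polyhedron $\G_+(f)$ via the toric data $\Sigma^{(n)}$, in the convenient case these exponents are finite and computable, which is what makes the corollary a genuinely usable statement rather than a mere existence claim.

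Thus the proof is just: ``By Remark \ref{Rem2.4}(i), $f$ satisfies the KN-condition. Since $f$ is also non-degenerate with $f(0)=0$ and $\nabla f(0)=0$, Theorem \ref{Theo3.1} applies and yields (\L$_0$), (\L$_1$), (\L$_2$).'' I would write it in essentially that compressed form, perhaps spelling out that (\L$_1$) is obtained with exponent $\theta = 1-\frac1N \in (0,1)$ so that the defining condition $\theta\in(0,1)$ in the introduction is met.
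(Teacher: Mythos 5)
Your proposal is correct and coincides with the paper's own proof, which is exactly the one-line argument ``this follows from Theorem \ref{Theo3.1} and Remark \ref{Rem2.4}'': convenientness gives the KN-condition, and Theorem \ref{Theo3.1} then yields (\L$_0$), (\L$_1$), (\L$_2$). Nothing further is needed.
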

\begin{proof}
This follows from Theorem \ref{Theo3.1} and Remark \ref{Rem2.4}.
\end{proof}
\section{Computation of the \L ojasiewicz exponents}

\subsection{Some lemmas}$\ $

Let $\varphi_1$ and $\varphi_2$ be non-negative functions on a neighbourhood of a point $\x_0 \in \RR^n.$ By $\varphi_1 \in \mathcal{O}(\varphi_2)$ we mean that there exists $c>0$ such that 
$$c \varphi_1(\x) \le \varphi_2(\x)$$ holds for all $\x$ belonging to some neighbourhood of $\x_0.$ If $\varphi_1 \in \mathcal{O}(\varphi_2)$  and $\varphi_2 \in \mathcal{O}(\varphi_1)$ then we write 
$$\varphi_1 \asymp \varphi_2.$$
By $\varphi_1 \in o(\varphi_2)$ we mean the following: there exists a neighbourhood $U(\x_0)$ of $\x_0$ such that 
$$\varphi_2^{-1}(0) \cap U(\x_0) \subset \varphi_1^{-1}(0) \cap U(\x_0)$$
and for any sequence $\{\x_k\} \subset U(\x_0),$ such that 
$\x_k \rightarrow \x_0$ and $\varphi_2(\x_k) \ne 0\  \forall k,$ $$ \displaystyle \lim_{k \rightarrow \infty} \frac{\varphi_1(\x_k)}{\varphi_2(\x_k)} =0.$$

Let $\A \subset \ZZ^n_+.$ Let $\G_+(\A)$ be the convex hull of the set $\displaystyle \bigcup_{\alpha \in \A} \{\alpha + \RR^n_+\}.$

Let $V_{\G_+(\A)}$ denote the set of all vertices of $\G_+(\A).$

Put $$g_{\G_+(\A)} (\x) := \sum_{\alpha \in V_{\G_+(\A)}} |\x^\alpha|.$$

Let $\G(\A)$ denote the union of all compact faces of ${\G_+(\A)}$.

\begin{Lemma}\label{Lem4.1.1} We have 
\begin{itemize}
\item[(i)] If $\alpha \in {\G_+(\A)} \cap \ZZ^n_+.$ then $|\x^\alpha| \in \O(g_{\G_+(\A)})$
\item[(ii)] If $\alpha \in \left({\G_+(\A)} \cap \ZZ^n_+ \right) \setminus \G(\A),$ then
$$|\x^\alpha| \in o(g_{\G_+(\A)}).$$
\end{itemize}
\end{Lemma}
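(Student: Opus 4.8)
\textbf{Proof proposal for Lemma \ref{Lem4.1.1}.}

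The plan is to reduce both statements to elementary comparisons of monomials on the monomial curve/wedge approaching the origin, using the geometry of the Newton polyhedron $\G_+(\A)$ together with the fact that $g_{\G_+(\A)}$ is a sum of absolute values of the vertex monomials. For part (i), I would argue as follows. Since $\alpha \in \G_+(\A)$, by definition of the convex hull $\alpha$ lies in the polyhedron $\G_+(\A) = \operatorname{conv}\bigcup_{\alpha' \in \A}\{\alpha' + \RR^n_+\}$, which equals $\operatorname{conv}(V_{\G_+(\A)}) + \RR^n_+$ (the Minkowski sum of the convex hull of the vertices with the positive orthant). Hence one can write $\alpha = \sum_{v \in V_{\G_+(\A)}} \lambda_v v + r$ with $\lambda_v \ge 0$, $\sum_v \lambda_v = 1$, and $r \in \RR^n_+$. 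On a neighbourhood of the origin, say the unit polydisc, we have $|\x^r| \le 1$ since all coordinates have absolute value $\le 1$ and $r$ has nonnegative entries; therefore $|\x^\alpha| \le \prod_{v} |\x^v|^{\lambda_v}$. The weighted AM--GM inequality then gives $\prod_v |\x^v|^{\lambda_v} \le \sum_v \lambda_v |\x^v| \le \sum_v |\x^v| = g_{\G_+(\A)}(\x)$. This establishes $|\x^\alpha| \le g_{\G_+(\A)}(\x)$ near the origin, i.e. $|\x^\alpha| \in \O(g_{\G_+(\A)})$ with constant $c = 1$.

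For part (ii), the point is that if $\alpha \in (\G_+(\A)\cap \ZZ^n_+)\setminus \G(\A)$, then $\alpha$ does not lie on any compact face; geometrically this means $\alpha$ is ``strictly interior'' in the unbounded direction, so that in fact $\alpha - \epsilon\1 \in \G_+(\A)$ for some small $\epsilon > 0$, where $\1 = (1,\dots,1)$. Equivalently, $\alpha = \beta + r$ with $\beta$ in a compact face (hence $\beta$ a convex combination of vertices) and $r \in \RR^n_+$ with $r \neq 0$ — in fact one can take $r$ to have all entries positive, or at least strictly positive in those coordinates where $\alpha$ is not forced. Then repeating the estimate from part (i), $|\x^\alpha| = |\x^\beta|\cdot|\x^r| \le g_{\G_+(\A)}(\x)\cdot|\x^r|$, and since $r \neq 0$ has nonnegative integer entries with at least one positive entry, $|\x^r| \to 0$ as $\x \to 0$. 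One must be a little careful here: the definition of $\varphi_1 \in o(\varphi_2)$ requires (a) the zero-set inclusion $\varphi_2^{-1}(0) \subset \varphi_1^{-1}(0)$ near the origin, which is immediate because $g_{\G_+(\A)}(\x) = 0$ forces $\x^v = 0$ for every vertex $v$, hence $\x^\alpha = 0$ (as $\alpha$ dominates a convex combination of vertices coordinatewise after absorbing $r$); and (b) that $|\x_k^\alpha|/g_{\G_+(\A)}(\x_k) \to 0$ along any sequence $\x_k \to 0$ with $g_{\G_+(\A)}(\x_k) \neq 0$, which follows from the bound $|\x_k^\alpha|/g_{\G_+(\A)}(\x_k) \le |\x_k^r| \to 0$.

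The main obstacle, and the step that deserves the most care, is the geometric claim in part (ii) that a lattice point of $\G_+(\A)$ lying off all compact faces can be written as a point of a compact face plus a \emph{nonzero} vector of $\RR^n_+$ — and more precisely, that the ``extra'' vector $r$ can be chosen to be genuinely positive in enough coordinates that $|\x^r| \to 0$. This requires understanding the face structure of $\G_+(\A)$: the compact faces $\G(\A)$ are exactly the faces whose defining normal vector $(\a, l)$ has $\a$ with all coordinates strictly positive; a point $\alpha \in \G_+(\A)$ fails to lie on any compact face precisely when for every such strictly positive $\a$ one has $\langle \a, \alpha\rangle > l(\a)$, and a compactness/minimization argument over the (compact, after normalization) set of such $\a$ then yields a uniform gap, which one converts into the existence of a valid displacement $r \in \RR^n_+ \setminus \{0\}$ with $\alpha - r' \in \G_+(\A)$ for suitable small $r'$ proportional to $r$. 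Once this decomposition is in hand, the analytic estimates are routine applications of AM--GM and the behaviour of monomials near the origin, exactly as in part (i).
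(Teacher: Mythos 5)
Your argument is a genuinely different and more elementary route than the paper's. The paper proves both parts by pulling everything back under the toric modification $\pi\colon Y_{\G_+(\A)}\to\RR^n$: near each point $\y^0\in\pi^{-1}(0)$ the pullback of $g_{\G_+(\A)}$ is comparable to a single monomial $\prod_{i\in I}y_{\sigma,i}^{l(\a^i(\sigma))}$, and the two assertions become statements about divisibility of monomials in the $\y_\sigma$-coordinates, glued by compactness of $\pi^{-1}(0)$. You instead work directly in $\RR^n$: writing $\alpha=\sum_v\lambda_v v+r$ with $v\in V_{\G_+(\A)}$, $\sum\lambda_v=1$, $r\in\RR^n_+$, and applying weighted AM--GM on the unit polydisc gives $|\x^\alpha|\le g_{\G_+(\A)}(\x)$ with constant $1$. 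Part (i) is complete and clean as written, and your approach has the advantage of not needing the resolution at all; what the paper's approach buys is uniformity with the rest of Section 4, where the same local normal forms on $Y_{\G_+(\A)}$ are reused for Lemmas 4.1.2 and 4.1.3.

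The one step that needs repair is exactly the one you flag as the crux of (ii): the decomposition $\alpha=\beta+r$ with $\beta\in\G(\A)$ and $r\in\RR^n_+\setminus\{0\}$ is \emph{true}, but both justifications you sketch are incorrect. First, $\alpha-\epsilon\1\in\G_+(\A)$ fails whenever $\alpha$ lies on an unbounded proper face: for $\A=\{(2,0),(0,2)\}$ and $\alpha=(3,0)\notin\G(\A)$, the point $\alpha-\epsilon\1$ has a negative coordinate. Second, the set of normalized strictly positive covectors $\a$ is an \emph{open} simplex, not compact, and in the same example $\langle\a,\alpha\rangle-l(\a)=\epsilon$ for $\a=(\epsilon,1)$, so there is no uniform gap. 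The correct argument is a minimization in $\x$-space rather than in $\a$-space: the set $\G_+(\A)\cap(\alpha-\RR^n_+)$ is a nonempty compact polytope (it sits inside $\RR^n_+\cap(\alpha-\RR^n_+)$); let $\beta$ minimize $\langle\1,\cdot\rangle$ on it. The normal-cone (KKT) condition at $\beta$ produces a strictly positive covector supporting $\G_+(\A)$ at $\beta$, so $\beta\in\G(\A)$, and $\beta\ne\alpha$ since $\alpha\notin\G(\A)$; hence $r=\alpha-\beta\in\RR^n_+\setminus\{0\}$. From there your estimate is fine: with $r_j>0$ one has $|\x^r|\le|x_j|^{r_j}\to0$ on the unit polydisc, which together with $|\x^\beta|\le g_{\G_+(\A)}(\x)$ (part (i) applied to $\beta$, whose proof never used integrality of the exponent) gives both the zero-set inclusion and the vanishing of the ratio required by the paper's definition of $o(\cdot)$. (A minor slip: $r$ need not have integer entries, but this is irrelevant to the limit.)
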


\begin{proof}
Let $\pi: Y_{\G_+(\A)} \rightarrow \RR^n$ be the toric modification associated with ${\G_+(\A)}$. Let $\y^0$ be an arbitrary point of $\pi^{-1}(0).$ Then $\y^0 \in \pi_\sigma^{-1}(0),$ for some 
$\sigma = \RR_+ \a^1(\sigma) + ...+ \mathbb{R}_+ \a^n(\sigma) \in \Sigma^{(n)}.$
We put 
$$\alpha_\sigma:= \bigcap_{i=1}^n \gamma (\a^i(\sigma)),$$
where $\gamma(\a^i(\sigma))$, $i=1,...,n,$ denotes the face of $\G_+(\A),$ defined by the pair $(\a^i(\sigma), l(\a^i(\sigma))$. Clearly, $\alpha_\sigma \in V_{\G_+(\A)}.$

In order to prove the lemma, it is enough to prove the following: there exists a neighbourhood $V(\y^0)$ of $\y^0$ such that
\begin{itemize}
\item If $\alpha \in \G_+(\A) \cap \ZZ^n_+$ then, in a neighbourhood $V(\y^0)$ of $\y^0$ we have
\begin{equation}\label{4.1}
|\x^\alpha \circ \pi_\sigma(\y_\sigma)| \in \O \left(|\x^{\alpha_\sigma} \circ \pi_\sigma(\y_\sigma)|\right)
\end{equation}
\item Moreover, if  $\alpha \in (\G_+(\A) \cap \ZZ^n_+) \setminus \G (\A),$ then, in $V(\y^0)$ we have
\begin{equation}\label{4.2} |\x^\alpha \circ \pi_\sigma(\y_\sigma)| \in o\left(|\x^{\alpha_\sigma} \circ \pi_\sigma(\y_\sigma)|\right)
\end{equation}
\end{itemize}

Firstly, we assume that $\y^0 =0 \in \RR^n(\sigma).$ Then we have
$$|\x^{\alpha_\sigma} \circ \pi_\sigma(\y_\sigma)| = \Big| \prod_{i=1}^n y_{\sigma,i}^{l(\a^i(\sigma))}\Big|$$
and for any $\alpha \in \G_+(f) \cap \ZZ^n_+,$ $\alpha \ne \alpha_\sigma,$
$$|\x^{\alpha} \circ \pi_\sigma(\y_\sigma)| \in o\left( \Big| \prod_{i=1}^n y_{\sigma,i}^{l(\a^i(\sigma))}\Big|\right).$$
Thus, (\ref{4.1}) and (\ref{4.2}) hold if $\y^0 =0 \in \RR^n(\sigma).$

Now, assume that $\y^0 \in \pi^{-1}_\sigma(0),$ but $\y^0 \ne 0 \in \RR^n(\sigma).$ Then, there exists a proper subset $I$ of the set $\{1,...,n\}$ such that
$$\y^0 \in T^*_I(\RR^n(\sigma)),$$
where $T^*_I(\RR^n(\sigma))$ denotes the set of all the points $\y_\sigma =(y_{\sigma,1}, ..., y_{\sigma,n})$ of $\RR^n(\sigma)$ such that $y_{\sigma,j} =0$ if and only if $j \in I$ and $\pi_\sigma(T^*_I(\RR^n(\sigma)))=0.$

By Lemma 3.7 of \cite{FY}, the set 
$$\gamma:= \bigcap_{i\in I} \gamma(\a^i(\sigma))$$
is a compact face of $\G_+(\A).$

Then, it is easy to see that
$$|\x^{\alpha_\sigma} \circ \pi_\sigma(\y_\sigma)| \asymp \Big| \prod_{i\in I} y_{\sigma,i}^{l(\a^i(\sigma))} \Big|$$
and 
$$|\x^{\alpha_\sigma} \circ \pi_\sigma(\y_\sigma)| \in o\left(\Big| \prod_{i\in I} y_i^{l(\a^i(\sigma))} \Big|\right)$$
if $ \alpha \in \left(\G_+(\A) \cap \ZZ^N_+\right) \setminus \gamma.$

Thus, (\ref{4.1}) and (\ref{4.2}) hold, and the lemma is proved.

\end{proof}
 The following result is an analogue of one result of \cite{FY}, \cite{Y}, where the authors considered the analytic case.
\begin{Lemma}\label{Lem4.1.2}
Let $f$ be a smooth function on a neighbourhood of the origin in $\RR^n$, $f(0)=0$ and $\nabla f(0)=0.$ Assume that $f$ is non-degenerate and satisfies the KN-condition. Then we have
$$\sum_{i=1}^n \Big|x_i \frac{\partial f(\x)}{\partial x_i} \Big| \asymp g_{\G_+(f)} (\x).$$
\end{Lemma}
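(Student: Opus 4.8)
The plan is to prove the two inequalities $\sum_i |x_i \partial f/\partial x_i| \in \O(g_{\G_+(f)})$ and $g_{\G_+(f)} \in \O(\sum_i |x_i \partial f/\partial x_i|)$ separately, and in both directions to pull everything back via a toric modification and argue locally near each point of $\pi^{-1}(0)$, exactly as in the proofs of Theorem \ref{Theo3.1} and Lemma \ref{Lem4.1.1}. First I would observe that by Proposition \ref{Prop2.1.2} the KN-condition gives a finite set $S \subset \G_+(f) \cap \ZZ^n_+$ with $f \in (\x^\alpha : \alpha \in S)\E_n$; differentiating $f = \sum_{\alpha \in S} \x^\alpha h_\alpha$ shows each $x_i \partial f/\partial x_i$ is an $\E_n$-combination of the monomials $\x^\alpha$, $\alpha \in S \subset \G_+(f)\cap\ZZ^n_+$. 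Applying Lemma \ref{Lem4.1.1}(i) to $\A = \{\,\alpha : c_\alpha \neq 0 \text{ in } T_0^\infty(f)\,\}$ (so $\G_+(\A) = \G_+(f)$), each $|\x^\alpha| \in \O(g_{\G_+(f)})$, hence $\sum_i |x_i \partial f/\partial x_i| \in \O(g_{\G_+(f)})$. This direction is routine.

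The reverse inequality is the main content. Here I would use the toric modification $\pi : Y_{\G_+(f)} \to \RR^n$ and Theorem \ref{Theo2.8}. Fix $\sigma \in \Sigma^{(n)}$ and $\y^0 \in \pi_\sigma^{-1}(0)$, with $\y^0 \in T^*_I(\RR^n(\sigma))$ for the appropriate $I \subset \{1,\dots,n\}$. On a neighbourhood $V(\y^0)$, Theorem \ref{Theo2.8} gives $f\circ\pi_\sigma(\y_\sigma) = \big(\prod_j y_{\sigma,j}^{l(\a^j(\sigma))}\big) f_\sigma(\y_\sigma)$ with $f_\sigma(0) \neq 0$ and the non-vanishing-derivative property \ref{2.8.2}. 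A chain-rule computation expresses $y_{\sigma,k}\,\partial (f\circ\pi_\sigma)/\partial y_{\sigma,k}$ in terms of the $x_i\,\partial f/\partial x_i$ pulled back (because $\pi_\sigma$ is a monomial map, $y_{\sigma,k}\partial/\partial y_{\sigma,k} = \sum_i a^k_i(\sigma)\, x_i\partial/\partial x_i$ after substitution), so $\sum_k |y_{\sigma,k}\partial(f\circ\pi_\sigma)/\partial y_{\sigma,k}| \in \O\big(\sum_i |x_i\partial f/\partial x_i|\circ\pi_\sigma\big)$ on $V(\y^0)$ (the matrix $(a^k_i(\sigma))$ being fixed). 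On the other hand, differentiating the product formula for $f\circ\pi_\sigma$: for $k \in I$ the factor $y_{\sigma,k}$ contributes, while for $k \notin I$ we use \ref{2.8.2} to find, near any zero of $f_\sigma$ on $T^*_I$, a direction $k \notin I$ with $\partial f_\sigma/\partial y_{\sigma,k} \neq 0$; combining these, one shows $\sum_k |y_{\sigma,k}\partial(f\circ\pi_\sigma)/\partial y_{\sigma,k}| \succeq |\prod_j y_{\sigma,j}^{l(\a^j(\sigma))}|$ on $V(\y^0)$, possibly after shrinking. By the $\asymp$-computation in the proof of Lemma \ref{Lem4.1.1}, $|\prod_j y_{\sigma,j}^{l(\a^j(\sigma))}| \asymp |\x^{\alpha_\sigma}\circ\pi_\sigma| = g_{\G_+(f)}\circ\pi_\sigma$ up to the other vertex monomials, and Lemma \ref{Lem4.1.1} shows $g_{\G_+(f)}\circ\pi_\sigma$ is controlled locally by the single vertex $\alpha_\sigma$ attached to $\sigma$. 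Chaining these gives $g_{\G_+(f)} \in \O\big(\sum_i |x_i\partial f/\partial x_i|\big)$ on $\pi(V(\y^0))$.

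Finally I would globalize: $\pi^{-1}(0)$ is compact, so finitely many $V(\y^j)$ cover it, $\pi\big(\bigcup_j V(\y^j)\big)$ contains a neighbourhood of the origin, and taking the minimum of the finitely many constants $c_j$ yields both inequalities on a neighbourhood of $0$, whence $\asymp$.

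The hard part will be the case $k \notin I$ in the lower bound for $\sum_k |y_{\sigma,k}\partial(f\circ\pi_\sigma)/\partial y_{\sigma,k}|$ near a zero of $f_\sigma$: one must upgrade the pointwise nonvanishing in \ref{2.8.2} to a uniform lower bound, using compactness of $V(\y^0)$ and continuity, and check that the monomial prefactor $\prod_j y_{\sigma,j}^{l(\a^j(\sigma))}$ does not get killed where $f_\sigma$ also vanishes — essentially verifying that the ``bad'' set is cut out transversally enough that $|y^\rho f_\sigma| $ and $\sum_k|y_{\sigma,k}\partial_k(y^\rho f_\sigma)|$ vanish to the same order. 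This is precisely where non-degeneracy (through Theorem \ref{Theo2.8}) is used, and the argument parallels the analytic case treated in \cite{FY}, \cite{Y}.
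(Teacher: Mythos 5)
Your proposal is correct, and while it shares the paper's overall skeleton (localize on the toric modification near each $\y^0\in\pi^{-1}(0)$, compare everything with the monomial $\prod_j y_{\sigma,j}^{l(\a^j(\sigma))}=\x^{\alpha_\sigma}\circ\pi_\sigma$ via Lemma \ref{Lem4.1.1}, then use compactness of $\pi^{-1}(0)$), the mechanism for the key lower bound is genuinely different. The paper stays downstairs: it uses Proposition \ref{Prop2.1.2} to write $f=f_\gamma+\sum_{\beta\in S'}\x^\beta\varphi_\beta$ with $\gamma=\bigcap_{i\in I}\gamma(\a^i(\sigma))$ a compact face and $S'$ disjoint from $\gamma$, shows the tail contributes only $o\bigl(\prod_{i\in I}y_{\sigma,i}^{l(\a^i(\sigma))}\bigr)$, and then invokes Kouchnirenko non-degeneracy of $f_\gamma$ at the point $\pi_\sigma(\hat{\y}^0)\in(\RR\setminus 0)^n$ to produce an index $i_0$ with $x_{i_0}\partial f_\gamma/\partial x_{i_0}\neq 0$ there. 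You instead work upstairs, taking Theorem \ref{Theo2.8} as a black box: writing $f\circ\pi_\sigma=\y^\rho f_\sigma$ and using the logarithmic fields $y_{\sigma,k}\partial/\partial y_{\sigma,k}=\sum_i a^k_i(\sigma)\,x_i\partial/\partial x_i$, you get $y_{\sigma,k}\partial_k(f\circ\pi_\sigma)=\y^\rho(\rho_k f_\sigma+y_{\sigma,k}\partial_k f_\sigma)$ and need only that the bracket is bounded away from $0$ near $\y^0$; condition \ref{2.8.2} handles the case $f_\sigma(\y^0)=0$ (some $j\notin I$ gives $y^0_{\sigma,j}\partial_jf_\sigma(\y^0)\neq 0$, with no cancellation since $\rho_jf_\sigma(\y^0)=0$), and continuity does the rest. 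Your route is cleaner in that it does not re-derive the splitting \eqref{4.5} or the $o(\cdot)$ estimate \eqref{4.6}, at the cost of leaning more heavily on the imported Theorem \ref{Theo2.8}. One small point to make explicit in the case $f_\sigma(\y^0)\neq 0$: you need some $k\in I$ with $\rho_k=l(\a^k(\sigma))>0$ so that the term $\rho_kf_\sigma(\y^0)$ (unpolluted by $y^0_{\sigma,k}\partial_kf_\sigma$, since $y^0_{\sigma,k}=0$) is nonzero; this follows because if $\rho_k=0$ for all $k\in I$ then $f\circ\pi_\sigma(\y^0)=\bigl(\prod_{j\notin I}(y^0_{\sigma,j})^{\rho_j}\bigr)f_\sigma(\y^0)\neq 0$, contradicting $f(0)=0$. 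With that observation supplied, the argument is complete.
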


\begin{proof} 


In order to prove the lemma, it is enough to show that for any $\sigma \in \Sigma^{(n)}$ and any $\y^0 \in \pi^{-1}_\sigma(0),$ there exists a neighbourhood of $\y^0$, in which we have
\begin{equation} \label{4.3}
|\x^{\alpha_\sigma} \circ \pi_\sigma(\y_\sigma)| \asymp \left(\sum_{i=1}^n \Big|x_i \frac{\partial f}{\partial x_i} \Big|\right) \circ \pi_\sigma (\y_\sigma)
\end{equation}
where $\displaystyle \alpha_\sigma := \bigcap_{i=1}^n \gamma(\a^i(\sigma)) \in V_{\G_+(f)}.$

Since $\y^0 \in \pi^{-1}_\sigma(0),$ there exists $I \subset \{1,...,n\}$  such that
$$\y^0 \in T^*_I( \RR^n(\sigma)) \subset \pi^{-1}_\sigma(0).$$ 
Let $\displaystyle \gamma := \displaystyle \bigcap_{i \in I} \gamma(\a^i(\sigma))$ (in particular, $\gamma = \alpha_\sigma$ if $I=\{1,...,n\}).$

Since $\y^0 = (y^0_{\sigma,1},..., y^0_{\sigma,n}),$ where $y^0_{\sigma,j} \ne 0$ if and only if $j \notin I,$ we have
$$\displaystyle |\x^{\alpha_\sigma} \circ \pi_\sigma(\y_\sigma)| \asymp \Big| \prod_{i\in I} y^{l(\a^i(\sigma))}_{\sigma,i}\Big|,$$
in a neighbourhood of $\y^0.$ 

So, to prove (\ref{4.3}), we have to prove that in a neighbourhood of $\y^0,$ we have
\begin{equation}\label{4.4} \displaystyle \left( \sum_{i=1}^n \Big|x_i \frac{\partial f}{\partial x_i}\Big|\right) \circ \pi_\sigma(\y_\sigma) \asymp \Big| \prod_{i \in I} y^{l(\a^i(\sigma))}_{\sigma,i} \Big|.\end{equation}

Since $f$ satisfies the KN-condition, by Proposition \ref{Prop2.1.2}(ii), $f$ can be written in the form 
$$f(\x) = \sum_{\alpha \in S} \x^\alpha \varphi_\alpha(\x),$$
where $S$ is a finite subset of $\G_+(f) \cap \ZZ^n_+$ and, for any $\alpha,$ $\varphi_\alpha$ is a smooth function on a neighbourhood of the origin. 

It is not difficult to see that
\begin{itemize}
\item If $\alpha \in \gamma \cap \ZZ^n_+$ and $\partial^\alpha f(0) \ne 0,$ then $\alpha \in S$.
\item If $\alpha \in S \cap \gamma$ and $\partial^\alpha f(0) = 0,$ then $\displaystyle \varphi_\alpha(0) =0.$
\end{itemize}
These facts show that we can rewrite $f(\x)$ in the form 
\begin{equation}\label{4.5}
f(\x) = f_\gamma(\x) + \sum_{\beta\in S'}\x^\beta \varphi_\beta,
\end{equation}
where $S'\subset \left(\G_+(f) \cap \ZZ^n_+\right)\setminus \gamma.$

It is easy to see that
\begin{equation}\label{4.6}
\Big| x_i \frac{\partial}{\partial x_i}\left(\sum_{\beta \in S'} \x^\beta \varphi_\beta \right)\Big|\circ \pi_\sigma(\y_\sigma) \in o\left(\prod_{i\in I} y_{\sigma, i}^{l(\a^i(\sigma))}\right)
\end{equation}
for any $i=1, ...,n.$

It follows from the definitions of $\gamma$ and $f_\gamma$ that, for any $i \in \{1,...,n\}:$
\begin{equation}\label{4.7}
\displaystyle \Big|x_i \frac{\partial f_\gamma}{\partial x_i}\Big| \circ \pi_\sigma(\y_\sigma) =  \Big| \prod_{i\in I} y_{\sigma, i}^{l(\a^i(\sigma))}\Big|.\left(\Big| x_i \frac{\partial f_\gamma}{\partial x_i}\Big|\circ \pi_\sigma(\hat{\y}_\sigma)\right),
\end{equation} 
where $\hat{\y}_\sigma := (\hat{y}_{\sigma,1}, ..., \hat{y}_{\sigma,n})$ with
$$\hat{y}_{\sigma,j} := \begin{cases} y_{\sigma,j}& \mbox{ if } j \notin I\\ 1 & \mbox{ otherwise}
\end{cases}.$$
Since $\hat{\y}_\sigma \in (\RR(\sigma) \setminus 0)^n$, $\x_\sigma:=\pi_\sigma(\hat{\y}_\sigma) \in (\RR\setminus 0)^n.$ Then, it follows from non-degenerency of $f$, there exists $i_0$ such that
\begin{equation}\label{4.8}
\displaystyle \Big|x_{i_0} \frac{\partial f_\gamma}{\partial x_i}\Big| \circ \pi_\sigma(\hat{\y}_\sigma) \ne 0
\end{equation}
for all $\hat{\y}_\sigma$ belonging to a neighbourhood of $\hat{\y}^0,$ where the point $\hat{\y}^0$ is defined via $\y^0$ by the following way: if $\y^0 = (y^0_1, ..., y^0_n)$ then
$\hat{\y}^0 := (\hat{y}^0_1, ..., \hat{y}^0_n)$ where
$$\hat{y}^0_i := \begin{cases} y^0_i& \mbox{ if } i \notin I\\ 1 & \mbox{ otherwise}
\end{cases}.$$

Thus, (\ref{4.4}) follows from (\ref{4.5}) and (\ref{4.8}). The lemma is proved.
\end{proof}

\begin{Lemma}\label{Lem4.1.3} Let $f$ be a smooth function on a neighbourhood of the origin in $\RR^n,$ $f(0)=0$ and $\nabla f(0)=0.$ Assume that $f$ is non-degenerate, non-negative and satisfies the KN-condition. Then in some neighbourhood of the origin, we have
$$f(\x) \asymp g_{\G_+(f)}(\x).$$
\end{Lemma}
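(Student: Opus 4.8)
The plan is to establish, in a neighbourhood of the origin, the two one-sided estimates $f\in\O(g_{\G_+(f)})$ and $g_{\G_+(f)}\in\O(f)$; since $f\ge 0$ these combine to give $f\asymp g_{\G_+(f)}$. The first estimate uses only the KN-condition: by Proposition~\ref{Prop2.1.2}(ii) one may write $f(\x)=\sum_{\alpha\in S}\x^\alpha\varphi_\alpha(\x)$ with $S\subset\G_+(f)\cap\ZZ^n_+$ finite and each $\varphi_\alpha$ smooth, hence bounded near $0$; using $f\ge 0$ this gives $0\le f(\x)\le M\sum_{\alpha\in S}|\x^\alpha|$ for $\x$ near $0$, and Lemma~\ref{Lem4.1.1}(i) applied to each $\alpha\in S$ yields $\sum_{\alpha\in S}|\x^\alpha|\in\O(g_{\G_+(f)})$, hence $f\in\O(g_{\G_+(f)})$. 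Non-degeneracy is not needed here.

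For the reverse estimate I would pass to the toric modification $\pi\colon Y_{\G_+(f)}\to\RR^n$. As in the proof of Theorem~\ref{Theo3.1}, it suffices to show that every $\y^0\in\pi^{-1}(0)$ has a neighbourhood $V(\y^0)$ on which $g_{\G_+(f)}\circ\pi\in\O(f\circ\pi)$, since $\pi^{-1}(0)$ is compact and the $\pi$-image of any neighbourhood of $\pi^{-1}(0)$ contains a neighbourhood of the origin. Fix $\y^0\in\RR^n(\sigma)$ with $\y^0\in\pi_\sigma^{-1}(0)$, $\sigma\in\Sigma^{(n)}$, and put $\alpha_\sigma:=\bigcap_{i=1}^n\gamma(\a^i(\sigma))\in V_{\G_+(f)}$. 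Summing the inclusion~(\ref{4.1}) from the proof of Lemma~\ref{Lem4.1.1} over the finitely many vertices of $\G_+(f)$, together with the trivial bound $|\x^{\alpha_\sigma}|\le g_{\G_+(f)}$, gives $g_{\G_+(f)}\circ\pi_\sigma\asymp|\x^{\alpha_\sigma}\circ\pi_\sigma|$ on a suitable $V(\y^0)$; and since $\alpha_\sigma\in\gamma(\a^j(\sigma))$ we have $\langle\a^j(\sigma),\alpha_\sigma\rangle=l(\a^j(\sigma))$, whence $\x^{\alpha_\sigma}\circ\pi_\sigma=\prod_{j=1}^n y_{\sigma,j}^{l(\a^j(\sigma))}$. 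On the other hand, $f$ being non-degenerate and satisfying the KN-condition, Theorem~\ref{Theo2.8} gives $f\circ\pi_\sigma=\bigl(\prod_{j=1}^n y_{\sigma,j}^{l(\a^j(\sigma))}\bigr)f_\sigma$ with $f_\sigma$ smooth, $f_\sigma(0)\ne 0$, and satisfying~(\ref{2.8.2}). Hence $|f\circ\pi_\sigma|=|\x^{\alpha_\sigma}\circ\pi_\sigma|\cdot|f_\sigma|$, and the reverse estimate reduces to the claim that $f_\sigma(\y^0)\ne 0$ for every $\y^0\in\pi_\sigma^{-1}(0)$: granting this, $|f_\sigma|\asymp 1$ near $\y^0$, so $f\circ\pi_\sigma\asymp g_{\G_+(f)}\circ\pi_\sigma$ on a smaller $V(\y^0)$ (recall $f\circ\pi_\sigma\ge 0$), and a finite subcover of $\pi^{-1}(0)$ finishes the argument.

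The main obstacle is the claim $f_\sigma(\y^0)\ne 0$, and this is the only step where non-negativity of $f$ is used. If $\y^0=0$ it is immediate from Theorem~\ref{Theo2.8}, so assume $\y^0\ne 0$ and set $I:=\{i:y^0_{\sigma,i}=0\}$, a non-empty proper subset of $\{1,\dots,n\}$; then, as in the proof of Lemma~\ref{Lem4.1.1}, $\y^0\in T^*_I(\RR^n(\sigma))$ and $\pi_\sigma(T^*_I(\RR^n(\sigma)))=0$, so by~(\ref{2.8.2}) there is $j\notin I$ with $\partial f_\sigma(\y^0)/\partial y_{\sigma,j}\ne 0$. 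Suppose, for contradiction, that $f_\sigma(\y^0)=0$. On the dense set of points near $\y^0$ at which all coordinates $y_{\sigma,k}$ are non-zero one has $\pi_\sigma(\y)\ne 0$ and $\pi_\sigma(\y)$ near the origin, hence $f\circ\pi_\sigma=\bigl(\prod_j y_{\sigma,j}^{l(\a^j(\sigma))}\bigr)f_\sigma\ge 0$ there. Now take any $v=(v_1,\dots,v_n)$ with $v_i=1$ for $i\in I$ and $v_k$ arbitrary for $k\notin I$; for small $t>0$ the point $\y^0+tv$ has all coordinates non-zero and $\prod_j(y^0_{\sigma,j}+tv_j)^{l(\a^j(\sigma))}$ has the fixed sign $\varepsilon_0:=\prod_{k\notin I}(\operatorname{sgn} y^0_{\sigma,k})^{l(\a^k(\sigma))}$, the factors with $i\in I$ being $(tv_i)^{l(\a^i(\sigma))}=t^{l(\a^i(\sigma))}\ge 0$. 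Therefore $\varepsilon_0 f_\sigma(\y^0+tv)\ge 0$ for all small $t>0$, and since $f_\sigma(\y^0)=0$, dividing by $t$ and letting $t\to 0^+$ gives $\varepsilon_0\langle\nabla f_\sigma(\y^0),v\rangle\ge 0$; as the $v_k$, $k\notin I$, are arbitrary, the part of this linear expression depending on them must vanish, i.e. $\partial f_\sigma(\y^0)/\partial y_{\sigma,k}=0$ for all $k\notin I$, contradicting~(\ref{2.8.2}). Thus $f_\sigma(\y^0)\ne 0$, and the lemma follows. I expect the genuinely delicate points to be exactly this sign/first-order argument and the bookkeeping that identifies $g_{\G_+(f)}\circ\pi_\sigma$ with the monomial $\prod_j y_{\sigma,j}^{l(\a^j(\sigma))}$ up to $\asymp$; the rest is the compactness reduction and direct appeals to Proposition~\ref{Prop2.1.2}, Lemma~\ref{Lem4.1.1} and Theorem~\ref{Theo2.8}.
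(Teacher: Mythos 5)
Your proposal is correct and follows essentially the same route as the paper: reduce via the toric modification and Theorem \ref{Theo2.8} to showing that $f_\sigma$ does not vanish on $\pi_\sigma^{-1}(0)$, and derive that non-vanishing by contradiction from condition (\ref{2.8.2}) together with the non-negativity of $f$. Your directional first-order argument (restricting to $v_i=1$ for $i\in I$ and varying $v_k$, $k\notin I$) is just a more explicit rendering of the paper's observation that a nonzero partial derivative in a direction $j\notin I$ would force $f_\sigma$, and hence $f\circ\pi_\sigma$, to change sign near $\y^0$.
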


\begin{proof} Let $\sigma = \RR_+\a^1(\sigma) + ...+ \RR_+\a^n(\sigma) \in \Sigma^{(n)},$

$\ \displaystyle \quad \qquad \alpha_\sigma := \bigcap_{i=1}^n \gamma(\a^i(\sigma)),$

and $\ \quad \y^0 \in \pi^{-1}_\sigma(0).$

According to Theorem \ref{Theo2.8}, there exists a smooth function $f_\sigma$ in some neighbourhood of $\y^0$ such that
$$f \circ \pi_\sigma(\y_\sigma) = \left( \prod_{i=1}^n y_{\sigma,i}^{l(\a^i(\sigma))}\right) f_\sigma(\y_\sigma),$$
such that the condition (\ref{2.8.1}) and (\ref{2.8.2}) hold true. 

Since, by (\ref{2.8.1}), $f_\sigma(0) \ne 0$ and $f \ge 0,$ all the numbers $l(\a^i(\sigma)),$ $i= 1,...,n,$ must be even and $f_\sigma(0) >0.$ Then, it implies that, in a neighbourhood of $\y^0 =0 \in \RR^n(\sigma),$ we have
\begin{equation}\label{4.9}f \circ \pi_\sigma(\y_\sigma) = \left( \prod_{i=1}^n y_{\sigma,i}^{l(\a^i(\sigma))}\right) f_\sigma(\y_\sigma) \asymp \prod_{i=1}^n y_{\sigma,i}^{l(\a^i(\sigma))} = \x^{\alpha_\sigma} \circ \pi_\sigma(\y_\sigma).
\end{equation}
Next, we will show that (\ref{4.9}) is still true for a neighbourhood of $\y^0,$ when $\y^0 \in T^*_I(\RR^n(\sigma))$, with $\pi_\sigma(T^*_I(\RR^n(\sigma)))=0$ and $I \ne \{1,...,n\}.$ To do this, it is enough to prove that $f_\sigma(\y^0) \ne 0.$ Assume, by contradiction, that $f_\sigma(\y^0)=0.$ Then, by (\ref{2.8.2}), there exist $j \notin I$ such that $\displaystyle \frac{\partial f_\sigma(\y^0)}{\partial y_j} \ne 0.$ Therefore, the sign of $f_\sigma(\y_\sigma)$ is not constant in a neighbourhood of $\y^0,$ which is impossible because $f\circ \pi_\sigma(\y_\sigma) \ge 0$ and $\displaystyle \prod_{i=1}^n y_{\sigma,i}^{l(\a^i(\sigma))} \ge 0.$

Thus, (\ref{4.9}) holds true for any $\sigma \in \Sigma^{(n)}$ and any $\y^0 \in \pi^{-1}(0).$ This finishes the proof of Lemma \ref{Lem4.1.3}.

\end{proof}

\subsection{Computation of $\theta (f)$}$\ $

We introduce the following notion
\begin{Definition}\label{Def4.2.1}\emph{Let $f$ be a smooth function on a neighbourhood of the origin, $f(0)=0,$ and $\nabla f(0)=0.$ We say that $f$ is {\it partially convenient}, if there exist $J \subset \{1,...,n\}$ and positive integer numbers $\nu_i,$ $i \in J$ such that}
\begin{enumerate}[label=(\roman*)]
\item \emph{$$\{\nu_i \e_i, i \in J\} \subset V_{\G_+(f)},$$ where $\e_i = (0,...,0,\underset{i}{1},0,...,0)$ is the i$-th$ unit vector of $\RR^n;$}
\item \emph{If $\alpha =(\alpha_1, ..., \alpha_n) \in V_{\G_+(f)},$ then 
$$\alpha_j =0 \ \forall j \notin J.$$
}
\end{enumerate}
\end{Definition}
\begin{Remark}\begin{enumerate}[label=\upshape(\roman*), ref=\theTheorem\roman*]
\item \label{4.2.2(i)} \emph{If $f$ is convenient, then $f$ is partially convenient ($J~=~\{1,...,n\}$).}
\item \emph{Let $f(x_1, x_2, x_3) = x_1^4 + x_1x_2 + x_2^4 + x_1^4 x_3^6.$ Then $f$ is partially convenient ($I = \{1, 2\}$), but not convenient.
}
\end{enumerate}
\end{Remark}

We put $\nu(f) := \max\{\nu_i : \nu_i \e_i \in 
V_{\G_+(f)}\}.$

\begin{Theorem}\label{Thm4.2.3} Let $f$ be a smooth function on a neighbourhood of the origin in $\RR^n$, $f(0) =0,$ $\nabla f(0) =0.$ Assume that $f$ is non-denenerate, partially convenient and satisfies the KN-condition. Then we have
$$\displaystyle \theta(f) = 1 -\frac{1}{\nu(f)}.$$
\end{Theorem}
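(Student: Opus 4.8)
The plan is to establish the inequality $\theta(f)\ge 1-\frac1{\nu(f)}$ and the reverse inequality $\theta(f)\le 1-\frac1{\nu(f)}$ separately, the latter being essentially a refinement of the argument already used for $(\text{\L}_1)$ in Theorem \ref{Theo3.1}. For the upper bound, I would revisit Claim \ref{Claim1}: near any $\y^0\in\pi^{-1}(0)$ one has $f\circ\pi(\y)=\pm y_1^{\rho_1}\cdots y_n^{\rho_n}$ in suitable coordinates, and by Lemma \ref{Lem4.1.1} (applied with $\A$ the support exponents of $f$) together with Theorem \ref{Theo2.8}, the multi-index $\rho$ corresponds to the vertex $\alpha_\sigma=\bigcap_i\gamma(\a^i(\sigma))\in V_{\G_+(f)}$. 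Because $f$ is partially convenient, condition (ii) of Definition \ref{Def4.2.1} forces every vertex $\alpha$ of $\G_+(f)$ to have $\alpha_j=0$ for $j\notin J$; I expect this to imply, via the structure of the unimodular cones $\sigma$, that each coordinate exponent $\rho_i=\langle \a^i(\sigma),\alpha_\sigma\rangle$ is bounded above by $\nu(f)=\max\{\nu_i:\nu_i\e_i\in V_{\G_+(f)}\}$. Granting $|\rho|\le$ (something) and, more to the point, $\max_i\rho_i\le\nu(f)$, the computation in the proof of part (ii) of Claim \ref{Claim2} gives $\|\nabla F(\y)\|\ge c_0|F(\y)|^{1-1/N(\rho)}$ with $N(\rho)=\sum\rho_i$; but to get the sharp exponent $1-1/\nu(f)$ one should instead bound $|F(\y)|^{1-1/\rho_{\max}}$ directly, using that $(1-\frac1{\rho_{\max}})\rho_{i_0}=\rho_{i_0}-\frac{\rho_{i_0}}{\rho_{\max}}\ge\rho_{i_0}-1$ for the index $i_0$ achieving $\rho_{i_0}=\rho_{\max}$, whence $|\y^\rho|^{1-1/\rho_{\max}}\le |\y^{\beta^{i_0}}|\cdot(\text{bounded})\le|\partial F/\partial y_{i_0}|\cdot(\text{bounded})$. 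Pulling this back through $\pi_\sigma$ with the Jacobian bound $M$ and covering $\pi^{-1}(0)$ by finitely many such neighbourhoods yields $\|\nabla f(\x)\|\ge c|f(\x)|^{1-1/\nu(f)}$ near $0$, i.e. $\theta(f)\le 1-\frac1{\nu(f)}$.

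For the lower bound $\theta(f)\ge 1-\frac1{\nu(f)}$ I would exhibit a curve (or a sequence of points) along which $\|\nabla f\|$ and $|f|$ have precisely the ratio forcing the exponent. Choose $i\in J$ with $\nu_i=\nu(f)$, so $\nu(f)\e_i\in V_{\G_+(f)}$; restrict $f$ to the $x_i$-axis, $x_j=0$ for $j\ne i$. Here I would use Proposition \ref{Prop2.1.2}(ii) and the fact that $\nu(f)\e_i$ is a vertex of the Newton polyhedron to argue that $f(0,\dots,x_i,\dots,0)=c x_i^{\nu(f)}+o(x_i^{\nu(f)})$ with $c\ne0$ — the point being that $\nu(f)\e_i$ being a vertex means the coefficient $c_{\nu(f)\e_i}\ne0$ in the Taylor series (and non-degeneracy on the zero-dimensional face $\{\nu(f)\e_i\}$ is automatic), so no cancellation occurs. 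Then along this axis $|f(\x)|\asymp|x_i|^{\nu(f)}$ while $\|\nabla f(\x)\|\asymp\big|\partial f/\partial x_i\big|\asymp|x_i|^{\nu(f)-1}$ (the other partials being of order $\ge|x_i|^{\nu(f)}$ or identically small, which needs a short check using that all monomials of $f$ have exponent $\ge$ the relevant face), giving $\|\nabla f\|\asymp|f|^{(\nu(f)-1)/\nu(f)}=|f|^{1-1/\nu(f)}$; hence no $\theta<1-\frac1{\nu(f)}$ can work.

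The main obstacle is the upper-bound step, specifically justifying that in the local monomialization $f\circ\pi(\y)=\pm\y^\rho$ the maximal exponent $\rho_{\max}$ is actually $\le\nu(f)$ rather than merely $\le L$. This requires translating the combinatorial hypothesis of partial convenientness — vertices supported on coordinates in $J$, with $\nu_i\e_i$ among the vertices — into a bound on $l(\a^i(\sigma))=\langle\a^i(\sigma),\alpha_\sigma\rangle$ for each generator $\a^i(\sigma)$ of each unimodular cone $\sigma$ in the fan $\Sigma$ subdividing $\Sigma_0$. I would argue that $l(\a)$, as a function of the direction $\a\in(\RR^n_+)^*$, attains its relevant values at vertices of $\G_+(f)$, and since the vertices $\nu_i\e_i$ ($i\in J$) together with the coordinate-plane structure control the polyhedron in the "$J$-directions", the exponent contributed along any $y_{\sigma,i}$ at a point of $\pi^{-1}(0)$ cannot exceed $\nu(f)$; a clean way to see this is that at $\y^0\in T^*_I(\RR^n(\sigma))$ the exponent $\rho_i$ equals $l(\a^i(\sigma))$ for $i\in I$, and $\a^i(\sigma)$ defines a compact face $\gamma\subset\G(f)$ whose vertices are among $V_{\G_+(f)}$, so $l(\a^i(\sigma))=\langle\a^i(\sigma),v\rangle$ for some vertex $v$, which by partial convenientness and by comparing with $\nu_j\e_j\in\gamma$ is bounded by $\nu(f)$. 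I would also need to double-check that the coordinate change in Claim \ref{Claim1} does not inflate exponents — it does not, since it only permutes/rescales the $y$-variables. Once $\rho_{\max}\le\nu(f)$ is secured, the rest is the routine gradient estimate together with the compactness covering already carried out in Theorem \ref{Theo3.1}.
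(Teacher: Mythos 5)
Your lower-bound argument (restricting to the $x_{i_0}$-axis with $\nu_{i_0}=\nu(f)$) is the same as the paper's. The upper bound, however, rests on a claim that is false: namely that in each local monomialization $f\circ\pi(\y)=\pm\y^\rho$ one has $\rho_{\max}\le\nu(f)$. Take $f(\x)=x_1^2+x_2^4+x_3^6$, which is convenient (hence partially convenient), non-degenerate and satisfies the KN-condition, with $\nu(f)=6$. The unique compact facet of $\G_+(f)$ has primitive normal $\a=(6,3,2)$, and $l(\a)=\langle\a,2\e_1\rangle=\langle\a,4\e_2\rangle=\langle\a,6\e_3\rangle=12$. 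The ray $\RR_+\a$ is a one-dimensional cone of $\Sigma_0$, so it survives in any simplicial subdivision $\Sigma$ and is a generator of some $\sigma\in\Sigma^{(n)}$; at the origin of the chart $\RR^n(\sigma)$ the corresponding exponent is $\rho_1=l(\a)=12>\nu(f)$. The flaw in your justification is the step ``$l(\a^i(\sigma))=\langle\a^i(\sigma),v\rangle$ for some vertex $v$, which by comparing with $\nu_j\e_j$ is bounded by $\nu(f)$'': the comparison only gives $l(\a)\le\langle\a,\nu_j\e_j\rangle=\nu_j a_j$, and the factor $a_j$ is not $1$ in general. Consequently your chart-by-chart gradient estimate can only yield $\theta(f)\le 1-\frac{1}{L}$ with $L=\max_\sigma\max_i l(\a^i(\sigma))$, which in this example is at least $1-\frac{1}{12}$, strictly worse than $1-\frac{1}{\nu(f)}=\frac56$.

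This is not a bookkeeping accident: the pullback identity $y_{\sigma,j}\,\partial F/\partial y_{\sigma,j}=\sum_k a^j_k\,x_k\,\partial f/\partial x_k$ shows that the toric charts naturally control the quantities $x_i\,\partial f/\partial x_i$ rather than $\partial f/\partial x_i$, and the lost factor $x_i$ is exactly what separates $L$ from $\nu(f)$. The paper therefore does not argue chart by chart. It proves globally that $\sum_i\big|x_i\,\partial f/\partial x_i\big|\asymp g_{\G_+(f)}$ (Lemma \ref{Lem4.1.2}), uses partial convenientness to discard the terms with $i\notin J$ and obtain $g_{\G_+(f)}\in\O\big(\|\x_J\|\cdot\|(\nabla f)_J\|\big)$, and then divides out the factor $\|\x_J\|$ via the inequality $\|\x_J\|^{\nu(f)}\in\O(g_{\G_+(f)})$ — this last step is where the vertices $\nu_i\e_i$, and hence the number $\nu(f)$, actually enter. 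To complete your proof you would need to replace the monomialization step by an argument of this kind. A further caution: the ``short check'' you defer in the lower bound — that $\partial f/\partial x_j$ for $j\ne i_0$ is of order at least $|x_{i_0}|^{\nu(f)-1}$ on the axis — is genuinely delicate, since a vertex of the form $\e_{i_0}+\e_j$ makes $\partial f/\partial x_j\asymp x_{i_0}$ there (the paper is equally terse on this point, so I only flag it rather than count it against you).
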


\begin{proof}
Since $f$ satisfies the KN-condition, by Proposition \ref{Prop2.1.2}, we have
\begin{equation}\label{4.10}
f(\x) = \sum_{\alpha \in S} \x^\alpha \varphi_\alpha(\x),
\end{equation}
where $S$ is a finite subset of $\G_+(f) \cap \ZZ^n_+$ and $\varphi_\alpha$, $\alpha \in S,$ is a smooth function in some neighbourhood of the origin.

Then it follows from Lemma \ref{Lem4.1.1} that
\begin{equation}\label{4.11}
|f(\x)| \in \O (g_{\G_+(f)}).
\end{equation}

By Lemma \ref{Lem4.1.2}, we have
\begin{equation}\label{4.12}
\sum_{i=1}^n \big|x_i \frac{\partial f}{\partial x_i} \big| \asymp g_{\G_+(f)}.
\end{equation}
Let $J$ denote the subset of $\{1,...,n\},$ given in Definition \ref{Def4.2.1}. Using (\ref{4.10}), Lemma \ref{Lem4.1.1}(ii), it is not difficult to see that, for any $j \notin J,$ we have
$$\displaystyle \Big| x_j \frac{\partial f }{\partial x_j}\Big| \in o\left(g_{\G_+(f)}\right).$$
This, together with (\ref{4.12}) give us
$$\sum_{i\in J} \big|x_i \frac{\partial f}{\partial x_i} \big| \asymp g_{\G_+(f)},$$
which implies that
\begin{equation}\label{4.14}
g_{\G_+(f)} \in \O\left(||\x_J||. ||(\nabla f)_J||\right),
\end{equation}
where, by definition
$$\displaystyle ||\x_J|| = \left(\sum_{i \in J} |x_i|^2\right)^{1/2}$$
and $$||(\nabla f)_J|| = \displaystyle \left[\sum_{i \in J} \left(\frac{\partial f}{\partial x_i}\right)^2\right]^{1/2}.$$

It is easy to see that 
\begin{equation}\label{4.15}
||\x_J||^{\nu(f)} \in \O(g_{\G_+(f)}).
\end{equation}
It follows from (\ref{4.14}) and (\ref{4.15}) that, for all $\x$ belonging a neighbourhood of the origin, we have
\begin{equation}\label{4.16}
||\x_J|| \in \O\left(||(\nabla f)_J||^{\frac{1}{\nu(f)-1}}\right).
\end{equation}
Combining (\ref{4.14}) and (\ref{4.16}), we get 
$$\displaystyle \left( g_{\G_+(f)}\right)\in \O\left(||(\nabla f)_J||^\frac{\nu(f)}{\nu(f)-1} \right)$$
or, 
$$\displaystyle g_{\G_+(f)}^{1-\frac{1}{\nu(f)}} \in \O(||(\nabla f)_J||),$$
which implies that 
$$\displaystyle g_{\G_+(f)}^{1-\frac{1}{\nu(f)}} \in \O(||\nabla f||).$$
Now, by (\ref{4.11}), we have:
$$\displaystyle |f|^{1-\frac{1}{\nu(f)}}\in \O(||\nabla f||),$$
which means that
$$\theta (f) \le 1- \frac{1}{\nu(f)}.$$

Now, let $\nu_{i_0} = \nu(f),$ with $i_0 \in J.$ 

Restricting $f$ and $g_{\G_+(f)}$ on the line
$$L:= \{ \x=(x_1,...,x_n) \in \RR^n : x_i = 0 \ \forall i \ne i_0\},$$
one can see that
$$\theta (f) \ge 1 - \frac{1}{\nu(f)}.$$
The theorem is proved.
\end{proof}

\subsection{Computation $\alpha(f)$}
\begin{Notation} \emph{For $\alpha = (\alpha_1, ..., \alpha_n) \in \ZZ^n_+,$ we put}
$$I(\alpha):= \{ i: \alpha_i \ne 0\} \qquad \qquad \qquad \qquad \qquad \qquad \qquad \quad $$
$$\hat{\alpha}:= (\hat{\alpha}_1, ..., \hat{\alpha}_n), \mbox{ where } \hat{\alpha}_i=\begin{cases} 1 \mbox{ if } i \in I(\alpha)\\ 0 \mbox{ otherwise} \end{cases}.\qquad\ \ \ $$

\emph{Let $\hat{\G}_+(f)$ denote the convex hull of the set}
$$\bigcup_{\alpha \in V_{\G_+(f)}} \{\hat{\alpha} + \RR^n_+\}.$$
\emph{Then $\hat{\G}_+(f)$ is a polyhedron in $\RR^n_+.$}

\emph{Let $V_{\hat{\G}_+(f)}$ be the set of all vertices of $\hat{\G}_+(f).$}

\emph{For $\alpha^* \in V_{\hat{\G}_+(f)},$ put}
$$\Delta(\alpha^*):= \{\x=(x_1, ..., x_n) \in \RR^n: x_j = 0 \ \forall j \notin I(\alpha^*) \mbox{ and } x_i = t \ \forall i \in I(\alpha^*), \ t \in \RR\}.$$ 

\emph{Let $D(\alpha^*)$ denote the intersection point of $\Delta(\alpha^*)$ and the boundary of $\G_+(f),$ and $d(\alpha^*)$ denote the non-zero coordinate of $D(\alpha^*).$ }

\emph{Finally, we put }
$$d(f): = \max\{d(\alpha^*): \alpha^* \in V_{\hat{\G}_+(f)} \}.$$

\end{Notation}

\begin{Theorem}\label{Thm4.3.2} Let $f$ be a smooth function on a neighbourhood of the origin in $\RR^n,$ $f(0)=0$ and $\nabla f(0)=0.$ Assume that $f$ is non-negative, non-degenerate and satisfies the KN-condition. Then we have
$$\alpha(f) = d(f).$$
\end{Theorem}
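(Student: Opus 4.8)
The plan is to establish the equality $\alpha(f) = d(f)$ by proving the two inequalities $\alpha(f) \le d(f)$ and $\alpha(f) \ge d(f)$ separately, using the asymptotic description of $f$ from Lemma \ref{Lem4.1.3} together with the combinatorics of the Newton polyhedron encoded in $\hat{\G}_+(f)$. Since $f$ is non-negative, non-degenerate and satisfies the KN-condition, Lemma \ref{Lem4.1.3} gives $f(\x) \asymp g_{\G_+(f)}(\x) = \sum_{\alpha \in V_{\G_+(f)}} |\x^\alpha|$ in a neighbourhood of the origin. Thus it suffices to prove the same Łojasiewicz-type estimate, namely $g_{\G_+(f)}(\x) \ge c\, |g(\x)|^{d(f)}$ whenever $g$ is smooth and $g^{-1}(0) \supset f^{-1}(0) = g_{\G_+(f)}^{-1}(0)$ locally, and to exhibit a curve along which the exponent $d(f)$ is attained.

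\textbf{Upper bound $\alpha(f) \le d(f)$.} First I would identify the zero set: near the origin, $g_{\G_+(f)}(\x) = 0$ iff $\x$ lies in the union of coordinate subspaces $T_{I}(\RR^n)$ over those $I$ such that no vertex $\alpha$ of $\G_+(f)$ has $I(\alpha) \cap I = \emptyset$; equivalently, the zero set is governed exactly by the combinatorial polyhedron $\hat{\G}_+(f)$. The key observation is that $\hat{\G}_+(f)$ records which coordinate faces are contained in $f^{-1}(0)$, hence in $g^{-1}(0)$. The strategy is to use a toric modification $\pi: Y_{\hat{\G}_+(f)} \to \RR^n$ (or the toric modification associated to $\G_+(f)$) and work chart by chart, as in the proof of Theorem \ref{Theo3.1}: in a local chart $\y_\sigma$, $g_{\G_+(f)} \circ \pi_\sigma \asymp |\y_\sigma^{\alpha_\sigma}|$ is a monomial, while $g \circ \pi_\sigma$ vanishes on the coordinate hyperplanes dictated by the inclusion of zero sets, so $g \circ \pi_\sigma$ is divisible by the corresponding product of coordinates and hence $|g \circ \pi_\sigma| \in \O(|\y_\sigma^{\hat{\alpha}_\sigma}|^{?})$ — the exponent being controlled by comparing $\alpha_\sigma$ against the facet of $\G_+(f)$ cut out in the direction $\hat{\alpha}_\sigma$, which is precisely $d(\alpha_\sigma) \le d(f)$. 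Compactness of $\pi^{-1}(0)$ then patches the finitely many chart estimates together, exactly as at the end of the proof of Theorem \ref{Theo3.1}.

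\textbf{Lower bound $\alpha(f) \ge d(f)$.} For this I would pick $\alpha^* \in V_{\hat{\G}_+(f)}$ realizing $d(f) = d(\alpha^*)$ and test against a specific admissible $g$ together with a specific curve. Take the line $\Delta(\alpha^*)$, i.e. $x_i = t$ for $i \in I(\alpha^*)$ and $x_j = 0$ otherwise, and let $g(\x) = \sum_{i \in I(\alpha^*)} x_i$ (or simply $g = x_{i_1}$ for one coordinate $i_1 \in I(\alpha^*)$): since every vertex of $\G_+(f)$ has support meeting $I(\alpha^*)$ only partially in the relevant way, $f$ vanishes on $\{x_i = 0, i \in I(\alpha^*)\} \supset \Delta(\alpha^*)^\perp$-type strata — more carefully, one checks $f^{-1}(0) \subset g^{-1}(0)$ locally using the definition of $\hat{\G}_+(f)$. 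Along $\Delta(\alpha^*)$ one has $|g| \asymp |t|$ while $f \asymp g_{\G_+(f)} \asymp |t|^{d(\alpha^*)}$, because $D(\alpha^*)$ is the point where $\Delta(\alpha^*)$ meets $\partial\G_+(f)$ and $d(\alpha^*)$ is its non-zero coordinate, so the dominant vertex monomial restricted to this line has degree $d(\alpha^*)$. This forces $\alpha(f) \ge d(\alpha^*) = d(f)$.

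\textbf{Main obstacle.} The delicate point is the upper bound, specifically showing that the divisibility of $g \circ \pi_\sigma$ by coordinate monomials, combined with the monomial form of $g_{\G_+(f)} \circ \pi_\sigma$, yields an exponent bounded by $d(f)$ uniformly over all charts and all boundary strata $T^*_I(\RR^n(\sigma))$ — one must verify that the worst chart corresponds exactly to a vertex $\alpha^*$ of $\hat{\G}_+(f)$ and that the ratio of exponents there is $d(\alpha^*)$, not something larger. This requires a careful bookkeeping argument relating the cone data $\a^i(\sigma)$, the vertex $\alpha_\sigma$, and the facet of $\G_+(f)$ in the direction $\hat{\alpha}_\sigma$, plus an application of Lemma \ref{Lem4.1.1} to discard non-dominant terms; the geometric content is that the "slowest approach" of $f^{-1}(0)$ from within a coordinate stratum is governed by the linear section length $d(\alpha^*)$.
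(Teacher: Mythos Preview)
Your lower bound is almost right but the test function is wrong: in general $g=x_{i_1}$ (or $g=\sum_{i\in I(\alpha^*)}x_i$) does \emph{not} satisfy $f^{-1}(0)\subset g^{-1}(0)$ --- take $f=x_1^2x_2^2$, where $f^{-1}(0)=\{x_1=0\}\cup\{x_2=0\}$ is not contained in $\{x_1=0\}$. The correct choice is $g=\x^{\alpha^*}$ itself; this vanishes on $f^{-1}(0)$ by (\ref{4.18}), and along $\Delta(\alpha^*)$ one gets $|g|\asymp |t|^{|I(\alpha^*)|}$ while $f\asymp|t|^{|I(\alpha^*)|\, d(\alpha^*)}$, forcing $\alpha(f)\ge d(\alpha^*)$. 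With this correction your lower-bound argument matches the paper's.

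The real gap is in the upper bound. Your chart-by-chart divisibility argument is exactly the proof of Theorem~\ref{Theo3.1}(\L$_0$): in a chart, $F=f\circ\pi_\sigma\asymp\prod_i y_i^{l(\a^i(\sigma))}$, so $G=g\circ\pi_\sigma$ is divisible by $\prod_{i:\,l(\a^i)>0}y_i$ to first order, which yields $|F|\ge c|G|^{\max_i l(\a^i(\sigma))}=c|G|^{l(\sigma)}$. Taking the worst chart gives $\alpha(f)\le L$, not $\le d(f)$, and $L$ can exceed $d(f)$ (it even depends on the chosen unimodular subdivision). Your claim that the chart exponent equals ``$d(\hat\alpha_\sigma)$'' is not what the divisibility argument produces: for $f=x_1^6+x_1^2x_2^2+x_2^6$ the chart at the vertex $(2,2)$ has $l(\sigma)=6$, whereas the diagonal meets $\partial\Gamma_+(f)$ at $t=2$.

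What the paper does instead is prove a smooth Nullstellensatz for squarefree monomial ideals (Lemma~\ref{Lem4.3.3}): any $h\in\E_n$ vanishing on $\{\x:\x^{\alpha^*}=0\ \forall\alpha^*\in V_{\hat\Gamma_+(f)}\}$ lies in the ideal $\langle \x^{\alpha^*}:\alpha^*\in V_{\hat\Gamma_+(f)}\rangle\E_n$. This reduces the problem from an arbitrary $g$ to the finitely many monomials $\x^{\alpha^*}$, and for those one checks directly (via toric charts, using $d(\alpha^*)\alpha^*\in\Gamma_+(f)$) that $|\x^{\alpha^*}|^{d(\alpha^*)}\in\O(g_{\Gamma_+(f)})$. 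This ideal-membership step is the missing idea in your sketch; without it the chart argument cannot see the finer invariant $d(f)$.
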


Put $\hat{\ZZ}_+^n:=\{ \alpha=(\alpha_1, ..., \alpha_n) \in \ZZ^n_+, \alpha\ne 0 \mbox{ and } \alpha_i \in \{0,1\}\ \forall i=1,...,n\}.$

Let $\A$ be a finite subset of $\hat{\ZZ}_+^n.$

Let $X(\A)$ denote the germ at $0 \in \RR^n$ of the set $\{ \x \in \RR^n: \x^\alpha = 0 \ \forall \alpha \in \A\}$ and $ \langle \x^\alpha : \alpha \in \A\rangle \E_n$ denote the ideal generated by the monomials $\x^\alpha$, $\alpha \in \A$, in the ring $\E_n.$

\begin{Lemma}\label{Lem4.3.3} Let $\A$ be a finite subset of $\hat{\ZZ}_+^n$ and $h \in \E_n.$ Assume that $h(\x)$ vanishes on $X(\A)$. Then we have
$$h \in \langle \x^\alpha: \alpha \in \A \rangle \E_n.$$
\end{Lemma}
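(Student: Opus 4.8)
\textbf{Proof plan for Lemma \ref{Lem4.3.3}.}

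The plan is to reduce to a single monomial by induction and then use a division/flatness argument based on the fact that $X(\A)$ is a union of coordinate subspaces. First I would set $\A = \{\alpha^1, \dots, \alpha^m\}$ and observe that, since each $\alpha^j \in \hat{\ZZ}_+^n$ is a $\{0,1\}$-vector, the variety $X(\A)$ is the set where for each $j$ at least one of the variables $x_i$ with $i \in I(\alpha^j)$ vanishes; equivalently $X(\A) = \bigcup T_K(\RR^n)$, the union running over all ``transversals'' $K \subset \{1,\dots,n\}$ that meet every $I(\alpha^j)$. The complement of $X(\A)$ (as a germ) is dense, so the hypothesis is that $h$ vanishes on this union of coordinate subspaces. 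The natural induction is on $|\A|$ together with $n$.

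For the base case $|\A| = 1$, say $\A = \{\alpha\}$ with $I(\alpha) = \{i_1, \dots, i_r\}$, the hypothesis is that $h$ vanishes on $\{x_{i_1}=0\} \cup \dots \cup \{x_{i_r}=0\}$. I would prove by induction on $r$ that $h \in (x_{i_1}\cdots x_{i_r})\E_n$: since $h$ vanishes on $\{x_{i_1}=0\}$, Hadamard's lemma (Taylor division in one smooth variable) gives $h = x_{i_1} h_1$ with $h_1 \in \E_n$; then $h_1$ vanishes on $\{x_{i_2}=0\} \cup \dots \cup \{x_{i_r}=0\}$ off the hyperplane $\{x_{i_1}=0\}$, hence on that union by continuity, and the inductive hypothesis applies. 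This handles $\x^\alpha$.

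For the inductive step, pick a variable $x_i$ that appears in some $\alpha^j$ but not all; split $\A = \A' \cup \A''$ where $\A'$ consists of the $\alpha^j$ with $i \in I(\alpha^j)$ and $\A''$ the rest. On the locus $\{x_i = 0\}$ the conditions coming from $\A'$ are automatically satisfied, so there $h$ vanishes only where the $\A''$-conditions (restricted to $\{x_i=0\}$) hold; by the inductive hypothesis in fewer variables, $h|_{x_i=0} \in \langle \x^\beta : \beta \in \A'' \rangle$, and lifting (Hadamard's lemma applied after subtracting a suitable element of that ideal) writes $h = h_0 + x_i h_1$ with $h_0 \in \langle \x^\beta : \beta \in \A''\rangle \E_n \subset \langle \x^\alpha : \alpha \in \A \rangle \E_n$. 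Then $x_i h_1$ still vanishes on $X(\A)$, so $h_1$ vanishes on $X(\A) \setminus \{x_i=0\}$, hence on $X(\A' \cup (\A'' \text{ with } i \text{ deleted}))$ — a problem with strictly smaller total complexity — and one finishes by another application of the inductive hypothesis, noting that $x_i \cdot \x^{\alpha^j - e_i} = \x^{\alpha^j}$ for $\alpha^j \in \A'$ absorbs the extra factor of $x_i$ back into the ideal $\langle \x^\alpha : \alpha \in \A\rangle$.

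The main obstacle is the bookkeeping in the inductive step: one must set up an induction parameter (e.g. $\sum_{\alpha \in \A} |\alpha|$, or the pair $(n, |\A|)$ in lexicographic order) that genuinely decreases in each of the two subproblems, and one must be careful that when a monomial $\x^{\alpha^j}$ loses a factor $x_i$ the resulting exponent vector is still a (possibly zero) $\{0,1\}$-vector, treating the zero case — where the ideal becomes the unit ideal and the statement is trivial — separately. The only analytic input is Hadamard's lemma for smooth functions, which is what forces us to stay in $\E_n$ rather than needing the KN-condition here; the rest is the combinatorial geometry of unions of coordinate subspaces.
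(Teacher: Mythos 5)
Your proposal is correct and follows the same basic strategy as the paper's proof: peel off one distinguished variable $x_i$, split $\A$ into the monomials containing $x_i$ and the rest, identify $h|_{x_i=0}$ as an element of the ideal generated by the second group via the inductive hypothesis, and use Hadamard's lemma to extract a factor of $x_i$ from the remainder. (The paper's choice of $i_1$ as the minimal index occurring in $I(\A)$, and its split into $\A_{i_1}$ and $\A'$, coincide with your choice of $x_i$ and your $\A'$, $\A''$.) The one genuine divergence is in how the remainder is finished: you write $h-h_0=x_ih_1$ and apply the inductive hypothesis to $h_1$, which vanishes on $X(\widetilde{\A})$ with $\widetilde{\A}=\{\alpha-\e_i:\alpha\in\A,\ i\in I(\alpha)\}\cup\A''$ by the density/continuity argument (valid because no monomial of $\widetilde{\A}$ involves $x_i$, so no component of $X(\widetilde{\A})$ lies inside $\{x_i=0\}$); the paper instead applies the inductive hypothesis to $h$ itself on the larger variety $X(\B\cup\A')$, where $\B$ is your set of $i$-deleted monomials, and then restores the missing factor of $x_i$ by the integral identity $h(\x)-h_0(\x)=\int_0^1\frac{d}{dt}[\cdots]\,dt$. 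Your route avoids that integral manipulation at the cost of the extra density step, which you do spell out correctly in the base case. Two small repairs for a write-up: the set you call $X(\A'\cup(\A''\mbox{ with }i\mbox{ deleted}))$ should read $X(\A''\cup(\A'\mbox{ with }i\mbox{ deleted}))$, since it is the monomials of $\A'$ that lose the factor $\e_i$; and the induction parameter $\sum_{\alpha\in\A}|\alpha|$ does strictly decrease in both subproblems (in the first because $\A'\neq\emptyset$, in the second because every $\alpha\in\A'$ drops one unit of degree), with the degenerate case $\alpha=\e_i$ giving the unit ideal exactly as you note, so the bookkeeping you were worried about goes through.
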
 
\begin{proof}
Put $\displaystyle I(\A):= \bigcup_{\alpha \in \A} I(\alpha).$ 

Let $r(\A)$ denote the number of elements of $I(\A).$ We will prove Lemma \ref{Lem4.3.3} by induction on $r(\A).$

Assume that $r(\A) = 1$ and $h$ is an arbitrary smooth function, $h(0) =0$ for all $\x \in X(\A).$

Since $r(\A)=1,$ $\A= \{ \e_{i_0}\}$ for some $i_0 \in \{1,..., n\}$, where $\e_{i_0}= (0,...,0,\underset{i_0}{1},0,...,0).$

Hence $X(\A) = \{ \x \in \RR^n: x_{i_0} =0\}.$

Since $h$ vanishes on $X(\A)$, $h = x_{i_0}h_1,$ for some $h_1 \in \E_n$ and the lemma is true.

Next we assume that for any subset $\A'$ of $\hat{\ZZ}^n_+$ such that $r(\A') < r(\A),$ the following statement holds: if $h' \in \E_n$ and $h'$ vanishes on $X(\A')$, then $$h'\in \langle \x^\alpha: \alpha \in \A' \rangle \E_n.$$ We will show that, if $ h \in \E_n$ and $h$ vanishes on $X(\A)$, then $h \in \langle \x^\alpha: \alpha \in \A \rangle \E_n.$

For $\alpha=(\alpha_1, ..., \alpha_n) \in \A$, let
$$i(\alpha) := \min\{i : \alpha_i \ne 0\}.$$

For $i \in \{1,...,n\},$ put 
$$\A_i:=\{ \alpha \in \A: i(\alpha)=i\}.$$

Let $\{i_1, ..., i_s\}$, $i_1 < ... < i_s,$ be the set of all $i \in \{1,...,n\}$ for which $\A_i \ne \emptyset.$ Then
$$\A= \bigcup_{j=1}^s \A_{i_j}.$$

Let $h \in \E_n$ and $h$ vanishes on $X(\A).$ Let $h_0$ denote the restriction of $h$ on the hyperplane $\{ \x \in \RR^n: x_{i_1}=0\}.$ Then $h_0 \in \E_n$ and $h_0$ does not depend on the variables $x_{i_1}.$ Moreover, we have
$$h = h_0 + x_{i_1}\varphi,$$
with $\varphi \in \E_n.$

Put $\A ' =\A \setminus \A_{i_1}.$ We will show that $h_0$ vanishes on $X(\A ').$

By contradiction, assume that $h_0(\x_0) \ne 0$ for some $\x_0 \in X(\A ')$. Put $\tilde{\x}_0 = (\tilde{x}_{0,1}, ..., \tilde{x}_{0,n}),$ where
$$\tilde{x}_{0,i}= \begin{cases} x_{0,i} \mbox{ if } i \ne i_1\\ 0 \mbox{ if } i=i_1 \end{cases}.$$
Clearly, $\tilde{\x}_0 \in \A.$ Since $h_0$ does not depend on $x_{i_1},$ we have
$$0 = h(\tilde{\x}_0)= h_0(\tilde{\x}_0)= h_0({\x}_0) \ne 0,$$
which is a contradiction. Hence $h_0$ vanishes on $X(\A')$. Since $r(\A') < r(\A),$ by the hypothesis of induction, we have
\begin{equation}\label{4.17} h_0 \in \langle \x^\alpha : \alpha \in \A '\rangle \E_n.
\end{equation}

We see that 
\begin{itemize}
\item[(i)] If $\varphi =0$ then by (\ref{4.17})
$$h= h_0 \in \langle \x^\alpha : \alpha \in \A' \rangle \E_n \subset \langle \x^\alpha : \alpha \in \A \rangle \E_n,$$
and the conclusion of the lemma is true.
\item[(ii)] If $\varphi \ne 0,$ but $\e_{i_1} = (0,...,0,\underset{i_1}{1},0,...,0) \in \A_{i_1},$ then, again, by (\ref{4.17}), we have
$$h= h_0 + x_{i_1}\varphi \in \langle \x^\alpha : \alpha \in \A \rangle \E_n.$$ 
\end{itemize} 
So it remains to show that $h\in \langle \x^\alpha : \alpha \in \A \rangle \E_n$ for the case when $\varphi \ne 0$ and $\e_{i_1} \notin \A_{i_1}.$

For $\alpha = (\alpha_1, ..., \alpha_n) \in \A_{i_1},$ we define 
$\alpha_0:= (\alpha_{0,1}, ..., \alpha_{0, n}),$ where
$$\alpha_{0,i}= \begin{cases} \alpha_i, \mbox{ if } i \ne i_1\\ 0, \mbox{ if } i=i_1 \end{cases}.$$
Since $\e_{i_1} \notin \A_{i_1}$, clearly, $\alpha_0 \in \hat{\ZZ}_+^n\ \forall \alpha \in \A_{i_1}.$

Put $\B := \{\alpha_0 : \alpha \in \A_{i_1}\}.$ Then $h$ vanishes on the set $X(\B \cup \A '),$  since $h$ vanishes on $X(\A)$ and $X(\B) \subset X(\A_{i_1}).$ 

Since $r(\B \cup \A ') < r(\A)$, by the hypothesis of induction, we have
$$h \in \langle \x^\alpha : \alpha \in \B \cup \A '\rangle \E_n$$
or 
$$h = \sum_{\beta \in \B} \x^{\beta} \varphi_\beta + \sum_{\alpha ' \in \A '} \x^{\alpha '} \psi_{\alpha '},$$
where $\varphi_\beta$ and $\psi_{\alpha '}$ belong to $\E_n$ $\forall \beta \in \B$ and $\alpha'\in \A'.$

If $\beta \in \B$ then $\beta = \alpha_0$ for some $\alpha \in \A_{i_1},$ we can rewrite the above equality as follows
$$h = \sum_{\alpha \in \A_{i_1}} \x^{\alpha_0} \varphi_\alpha + \sum_{\alpha ' \in \A '} \x^{\alpha '} \psi_{\alpha '}.$$

Let $\x(t) = (x_1, ..., x_{i_1-1}, tx_{i_1}, x_{i_1+1},..., x_n).$ We have
\begin{align*}\displaystyle h(\x) - h_0(\x) &= \int_0^1 \sum
_{\alpha \in \A_{i_1}} \left(\x^{\alpha_0} \frac{d}{dt}\varphi_\alpha(\x(t))\right)dt + \int_0^1 \sum_{\alpha ' \in \A '} \left(\x^{\alpha '} \frac{d}{dt} \psi_{\alpha '}(\x(t)) \right)dt\nonumber\\
 &= \sum_{\alpha \in \A_{i_1}} \x^{\alpha} \hat{\varphi}_\alpha(\x) + \sum_{\alpha ' \in \A '} \x^{\alpha '} \hat{\psi}_{\alpha '}(\x),
\end{align*}
where 
$$ \hat{\varphi}_\alpha = \int_0^1 \frac{\partial \varphi_\alpha(\x(t))}{\partial x_{i_1}}dt, \ \forall \alpha \in \A_{i_1}$$
and 
$$\hat{\psi}_{\alpha '}= \int_0^1 x_{i_1}\frac{\partial \psi_{\alpha '}(\x(t))}{\partial x_{i_1}}dt, \ \forall \alpha'\in \A'.$$
Since $\hat{\varphi_\alpha} \in \E_n, \ \forall \alpha \in \A_{i_1}$ and $\hat{\psi}_{\alpha '} \in \E_n, \ \forall \alpha ' \in \A ',$ we have 
$$h-h_0 \in \langle \x^\alpha : \alpha \in \A \rangle \E_n.$$
Then, by (\ref{4.17}), we have
$$h\in \langle \x^\alpha : \alpha \in \A \rangle \E_n.$$
The lemma is proved.
\end{proof}

\noindent
{\bf Proof of Theorem \ref{Thm4.3.2}}

By Lemma \ref{Lem4.1.3}, we have
$$f \asymp g_{\G_+(f)}.$$
Hence, 
$$\left(f^{-1}(0),0\right)= \left(g_{\G_+(f)}^{-1}(0),0\right)$$
where $\left(f^{-1}(0),0\right)$ and $\left(g_{\G_+(f)}^{-1}(0),0\right)$ denote the germs respectively of $f^{-1}(0)$ and $g_{\G_+(f)}^{-1}(0)$ at the origin.
Therefore,
\begin{align*} (\left(f^{-1}(0),0\right) &= \{\x: \RR^n: |\x|^\alpha = 0 \ \forall \alpha \in V_{\G_+(f)}\}\\ &=\{\x: \RR^n: |\x|^{\hat{\alpha}} = 0 \ \forall \alpha \in V_{\G_+(f)}\}
\end{align*}
where $\hat{\alpha} = (\hat{\alpha}_1,...,\hat{\alpha}_n),$ $\hat{\alpha}_i =1$ if $i \in I(\alpha)$ and $\hat{\alpha}_i=0$ if $i \notin I(\alpha).$

By Lemma \ref{Lem4.1.1}, we have
$$\displaystyle \sum_{\alpha^* \in V_{\hat{\G}_+(f)}} |\x^{\alpha^*}| \asymp \sum_{\alpha \in V_{\G_+(f)}} |\x^{\hat{\alpha}}|.$$
From this it follows that
\begin{equation}\label{4.18}
(\left(f^{-1}(0),0\right) = \{\x: \RR^n: |\x|^{\alpha^*} = 0 \ \forall \alpha^* \in V_{\hat{\G}_+(f)}\}
\end{equation}

\noindent
{\it Proof of $\alpha(f) \le d(f):$}

\noindent
Claim: For any $\alpha^* \in V_{\hat{\G}_+(f)},$ there exist $c, \alpha >0$ such that
$$g_{\G_+(f)}(\x) \ge c||\x^{\alpha^*}||^{d(\alpha^*)},\ \forall ||\x|| < \varepsilon.$$

Proof: Let $\alpha^* \in V_{\hat{\G}_+(f)}.$ Then $\alpha^* \in \D(\alpha^*)$ and $D(\alpha^*) = d(\alpha^*) \alpha^* \in \G_+(f).$

In order to prove the claim, we will use the toric modification
$$\pi: Y_{\G_+(f)} \rightarrow \RR^n.$$
Let $\sigma = \RR_+\a^1(\sigma) + ...+ \RR_+\a^n(\sigma) \in \Sigma^{(n)}$ and $\y^0 \in \pi^{-1}(0).$ Then, it is easy to see that there exists a neighbourhood of $\y^0$ in which we have
$$ g_{\G_+(f)}\circ \pi_\sigma(\y_\sigma) \asymp \Big|y_{\sigma,1}^{l(\a^1(\sigma))}...y_{\sigma,n}^{l(\a^n(\sigma))}\Big|$$
and
$$\Big|x^{\alpha^*}\Big|^{d(\alpha^*)} \asymp \Big|y_{\sigma,1}^{L_1(\sigma)}...y_{\sigma,n}^{L_n(\sigma)}\Big|,$$
where $L_i(\sigma) = \langle \a^i(\sigma), d(\alpha^*)\alpha^*\rangle.$

Since $d(\alpha^*) \alpha^* \in \G_+(f),$ $L_i(\sigma) \ge l(\a^i(\sigma)) \ \forall i = 1,...,n.$

Hence, in a neighbourhood of the point $\y^0$, we have
$$|\x^{\alpha^*}|^{d(\alpha^*)} \circ \pi_\sigma(\y_\sigma) \in \O(g_{\G_+(f)} \circ \pi_\sigma(\y_\sigma)).$$ 
 
Since this relation holds for any $\sigma \in \Sigma^{(n)}$ and any $\y^0 \in \pi^{-1}_\sigma(0),$ we conclude that in some neighbourhood of $0 \in \RR^n$, we have
$$|\x^{\alpha^*}|^{d(\alpha^*)} \in \O(g_{\G_+(f)}).$$
Hence, the claim is proved.

Now, let $h$ be a smooth function on a neighbourhood of the origin and $f^{-1}(0) \subset h^{-1}(0).$ Then, by Lemma \ref{Lem4.3.3} and (\ref{4.18}), $h$ can be written in the form
$$h(\x) = \sum_{\alpha^* \in V_{\hat{\G}_+(f)}}\x^{\alpha^*}\varphi_{\alpha^*}(\x),$$
where $\varphi_{\alpha^*}\in \E_n$ for every $\alpha^* \in V_{\hat{\G}_+(f)}.$

This implies that
$$|h| \in \O\left( \sum_{\alpha^* \in V_{\hat{\G}_+(f)}} |\x^{\alpha^*}| \right)$$

Hence, by the above claim,
$$\displaystyle |h| \in \O\left( \sum_{\alpha^* \in V_{\hat{\G}_+(f)}}g^{\frac{1}{d(\alpha^*)}}_{\G_+(f)} \right).$$
And, since $d(f) =\max \{d(\alpha^*):$ $\alpha^* \in V_{\hat{\G}_+(f)}\},$ we get
$$|h| \in \O\left(g^{\frac{1}{d(f)}}_{\G_+(f)}\right),$$
or, $$|h|^{d(f)} \in \O \left(g_{\G_+(f)}\right).$$
Because $f \asymp g_{\G_+(f)}$ (Lemma \ref{Lem4.1.3}), we obtain 
$$|h|^{d(f)} \in \O(f),$$
which implies that $\alpha(f) \le d(f).$

\noindent
{\it Proof of the inequality $\alpha(f) \ge d(f):$}

By contradiction, assume that $\alpha(f) < d(f).$

Let $\alpha^*$ denote the element of $V_{\hat{\G}_+(f)}$ such that $d(\alpha^*) = d(f).$ Clearly, $\x^{\alpha^*}$ vanishes on $(f^{-1}(0),0).$ Since $\alpha(f) <d(f),$ the point $\alpha(f)\alpha^*$ does not belong to $\G_+(f).$ Then there exist $\sigma=\RR_+\a^1(\sigma) +...+ \RR_+\a^n(\sigma) \in \Sigma^{(n)}$ and $i_0 \in \{1,...,n\}$ such that 
$$\langle \a^{i_0}(\sigma), \alpha(f) \alpha^* \rangle < l(\a^{i_0}(\sigma)).$$
In any small neighbourhood $U$ of the point $o \in \RR^n(\sigma)$, we have 
$$g_{\G_+(f)} \circ \pi_\sigma(\y_\sigma) \asymp \Big|y_{\sigma,1}^{l(\a^1(\sigma))}\ldots y_{\sigma,n}^{l(\a^n(\sigma))}\Big|$$
$$|\x^{\alpha(f)\alpha^*}| \circ \pi_\sigma(\y_\sigma) = \Big|y_{\sigma,1}^{L_1}\ldots y_{\sigma,n}^{L_n}\Big|,$$
where $L_i = \alpha(f)\langle \a^i(\sigma), \alpha^* \rangle.$

Since $L_{i_0} < l(\a^{i_0}(\sigma)),$ the relation
 $$|\x^{\alpha(f)\alpha^*}| \circ \pi_\sigma(\y_\sigma) \in \O\left(g_{\G_+(f)} \circ \pi_\sigma(\y_\sigma)\right)$$
 does not hold on $U$. Hence, the relation
$$|\x^{\alpha^*}|^{\alpha(f)} \in \O(g_{\G_+(f)})$$
does not hold on the set $\pi_\sigma(U).$ Now, since $f \asymp g_{\G_+(f)}$ and $\pi_\sigma(U) \cap \mathbb{B}_\varepsilon \ne 0 $ for any $\varepsilon >0,$ the inequality
$$f(\x) \ge c|\x^{\alpha^*}|^{\alpha(f)},  \forall ||\x|| \le \varepsilon$$
does not hold for any $c>0$ and $\varepsilon>0.$ This contradiction shows that $\alpha(f) \ge d(f)$ and the theorem is proved. \hfill $\Box$

\subsection{Computation of $\mathcal{L} (f)$}$\ $

We are going to compute $\mathcal{L}(f)$ under the following condition: $f$ is smooth on a neighbourhood of the origin, $f(0)=0, \ \nabla f(0)=0;$ $f$ is non-negative, non-degenerate, and satisfying the KN--condition.

With the notation of \S 4.3, we put $$I(f) := \displaystyle \bigcup_{\alpha^*\in V_{\hat{\G}_+(f)}} I(\alpha^*)$$ 

Let $I(f) = \{i_1 <...<i_s\}.$ Let $S_{I(f)}$ denote the set of all bijections of   $I(f)$. 

For $\rho \in S_{I(f)},$ we put
$$U(\rho):= \{\x = (x_1,...,x_n) \in \RR^n: |x_{\rho(i_1)}| \le ...\le |x_{\rho(i_s)}|\}.$$

Since $f$, by Theorem \ref{Theo3.1}, admits the inequality (\L$_2$), there exist $c, \varepsilon >0$ and $\tau >0$ such that
\begin{equation}\label{4.19} f(\x) \ge c \dist (\x, f^{-1}(0))^\tau,\ \forall ||\x|| \le \varepsilon, \x \in U(\rho).
\end{equation}

Put, for $\rho \in S_{I(f)},$
$$\mathcal{L}_\rho (f) = \inf \{ \tau:  \mbox{ (\ref{4.19}) holds}\}.$$

Let $J$ be a subset of $I(f)$. We say that {\it $J$ satisfies the condition $(*)$}, if for any $\alpha^* \in V_{\hat{\G}_+(f)},$ the set $J \cap I(\alpha^*)$ consists of a unique element.

Let $\Lambda$ denote the set of all subset $J$ of $I(f)$ satisfying the condition $(*)$. By (\ref{4.18}), we have
\begin{equation}\label{4.20}
\left(f^{-1}(0),0\right) = \left( \bigcup_{J \in \Lambda} T_J(\RR^n),0\right).
\end{equation} 

To compute $\Lc_\rho(f),$ it is more convenient to use, instead the Euclidean norm, the max-norm:
$$||(x_1, ..., x_n)||_{\max} = \max\{ |x_i|: i =1, ..., n\}.$$

Let $J \in \Lambda.$ Then
$$d(\x, T_J(\RR^n) = \max\{|x_i| : i \in J\},$$
where $\x \in \RR^n$ and $d(\x, T_J(\RR^n) $ denotes the distance from $\x$ to $T_J(\RR^n)$ in the max-norm. 

If, in addition, $\x \in U(\rho),$ then
$$d(\x, T_J(\RR^n)= |x_{i(J,\rho)}|,$$
where $i(J, \rho)$ denotes the element of $J$ such that
$$\rho(i(J, \rho)) = \max\{\rho(i): i \in J\}.$$

For $\rho \in S_{I(f)}$, we denote by $i_\rho$ the element of $\{ i(J, \rho): J \in \Lambda\}$ such that
$$\rho(i_\rho) = \min \{\rho(i(J, \rho)): J \in \Lambda\}.$$

Then, by (\ref{4.20}), for any $\x \in U(\rho)$, sufficiently close to the origin, we have
\begin{equation}\label{4.21}
d(\x, f^{-1}(0)) = |x_{i_\rho}|.
\end{equation}

For $\rho \in S_{I(f)}$, we put
$$V_{\G_+(f)}(\rho):= \{\alpha \in V_{\G_+(f)}: I(\alpha) \subset I(f) \mbox{ such that } \forall i \in I(\alpha): \rho(i) \ge \rho(i(\rho))\}.
$$

\begin{Theorem}\label{Theo4.4.2} Let $f$ be a smooth function on a neighbourhood of the origin in $\RR^n,$ $f(0)=0,$ and $\nabla f(0) =0.$ Assume that $f$ is non-degenerate, non-negative and satisfies the KN-condition. Then the following statements hold true
\begin{enumerate}[label=\upshape(\roman*), ref=\theTheorem.(\roman*)]
\item\label{4.4.2i} For any $\rho \in S_{I(f)},$ the set $V_{\G_+(f)}(\rho)$ is not empty;
\item\label{4.4.2ii} For any $\rho\in S_{I(f)},$
$$\Lc_\rho(f) = \min \{|\alpha| : \alpha \in V_{\G_+(f)}(\rho)\}.$$
\item\label{4.4.2iii} $\displaystyle \Lc(f) = \max_{\rho \in S_{I(f)}} \Lc_\rho(f).$
\end{enumerate}
\end{Theorem}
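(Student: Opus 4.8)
The plan is to establish the three statements in order, using the toric-resolution machinery from Section 2 together with Lemma \ref{Lem4.1.3} (which gives $f \asymp g_{\G_+(f)}$) and the distance formula \eqref{4.21} that identifies $\dist(\x,f^{-1}(0))$ with $|x_{i_\rho}|$ on $U(\rho)$.

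\textbf{Step 1 (nonemptiness of $V_{\G_+(f)}(\rho)$).} Fix $\rho \in S_{I(f)}$. The point $i_\rho$ is by construction of the form $i(J_0,\rho)$ for some $J_0 \in \Lambda$, and $\rho(i_\rho)$ is minimal among the $\rho(i(J,\rho))$. I would argue that not every vertex $\alpha \in V_{\G_+(f)}$ can have $I(\alpha) \subset \{i : \rho(i) < \rho(i_\rho)\}$: if that were the case, then the index $i_\rho$ would be ``free'' in the sense that setting $x_{i_\rho}=0$ while keeping all other $x_i\neq 0$ would not force any monomial $\x^\alpha$ to vanish. But $i_\rho \in I(\alpha^*)$ for some $\alpha^* \in V_{\hat\G_+(f)}$, and by \eqref{4.18} the coordinate hyperplane directions realizing $(f^{-1}(0),0)$ are exactly those meeting every $I(\alpha^*)$; combined with the definition of $i_\rho$ via the $J$'s in $\Lambda$ satisfying $(*)$, one sees that there must exist a vertex $\alpha$ with $I(\alpha)\subset I(f)$ and $\rho(i)\ge \rho(i_\rho)$ for all $i\in I(\alpha)$. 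This is largely a combinatorial unwinding of the definitions of $\Lambda$, $i(J,\rho)$, and $i_\rho$.

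\textbf{Step 2 (the formula for $\Lc_\rho(f)$).} Work in the max-norm on $U(\rho)$ near $0$; on this set $\dist(\x,f^{-1}(0)) = |x_{i_\rho}|$ by \eqref{4.21}, and $f(\x)\asymp g_{\G_+(f)}(\x) = \sum_{\alpha\in V_{\G_+(f)}}|\x^\alpha|$ by Lemma \ref{Lem4.1.3}. For the upper bound $\Lc_\rho(f)\le m:=\min\{|\alpha|:\alpha\in V_{\G_+(f)}(\rho)\}$: pick $\alpha_0\in V_{\G_+(f)}(\rho)$ with $|\alpha_0|=m$; every variable $x_j$ occurring in $\x^{\alpha_0}$ has $\rho(j)\ge\rho(i_\rho)$, hence $|x_j|\ge|x_{i_\rho}|$ on $U(\rho)$, so $|\x^{\alpha_0}|\ge |x_{i_\rho}|^{|\alpha_0|}=\dist(\x,f^{-1}(0))^m$, giving $f\gtrsim g_{\G_+(f)}\ge |\x^{\alpha_0}|\gtrsim \dist(\x,f^{-1}(0))^m$. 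For the lower bound $\Lc_\rho(f)\ge m$: choose a test curve in $U(\rho)$ along which the minimum defining $m$ is attained but no smaller exponent works — concretely, set $x_i = t$ for $i$ ranging over $I(\alpha_0)$ together with the indices forced by membership in some $J\in\Lambda$, and let the remaining coordinates be a fixed small nonzero constant or a higher power of $t$ so that the curve stays in $U(\rho)$ and off $f^{-1}(0)$ only through $x_{i_\rho}$. Along such a curve $f \asymp t^{m}$ while $\dist(\x,f^{-1}(0)) \asymp t$, forcing the exponent to be at least $m$. The main point to get right is that $V_{\G_+(f)}(\rho)$ is precisely the set of vertices whose monomials are \emph{not} automatically of smaller order than $|x_{i_\rho}|^{|\alpha|}$ on $U(\rho)$; vertices with some $i\in I(\alpha)$ having $\rho(i)<\rho(i_\rho)$ contribute monomials that are $o(|x_{i_\rho}|^{\text{anything}})$ in the relevant regime, or rather are dominated in the comparison, so they don't help the lower bound and are harmless for the upper bound.

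\textbf{Step 3 (globalizing over $\rho$).} Since $\RR^n = \bigcup_{\rho\in S_{I(f)}} U(\rho)$ (every point lies in some region where the coordinates with indices in $I(f)$ are linearly ordered in absolute value), inequality (\L$_2$) holds near $0$ with exponent $\Lc$ if and only if it holds on each $U(\rho)$ with that exponent. Hence $\Lc(f) = \max_{\rho} \Lc_\rho(f)$, which with (ii) is the asserted formula. A minor technical point is that on the overlaps $U(\rho)\cap U(\rho')$ one must know the two formulas are consistent, but this is automatic since $\Lc(f)$ is intrinsic and $\Lc_\rho(f)$ was defined as an infimum of valid exponents on $U(\rho)$; the max is then forced.

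\textbf{Expected main obstacle.} The delicate part is the lower bound in (ii): constructing a test arc inside $U(\rho)$ that simultaneously (a) realizes $f(\x)\asymp \dist(\x,f^{-1}(0))^{m}$ with $m = \min\{|\alpha|:\alpha\in V_{\G_+(f)}(\rho)\}$, (b) has its distance to $f^{-1}(0)$ controlled exactly by $|x_{i_\rho}|$ as in \eqref{4.21}, and (c) keeps all monomials $\x^\alpha$ with $\alpha\notin V_{\G_+(f)}(\rho)$ of strictly higher order so they do not spoil the estimate $f\asymp t^m$. Balancing these three requirements against the combinatorics of $\Lambda$ and the ordering $\rho$ — and checking that Lemma \ref{Lem4.1.3} still controls $f$ along the arc even though $f$ is only smooth, not analytic — is where the real work lies; everything else is bookkeeping with Newton polyhedra.
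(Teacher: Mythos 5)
Your proposal follows essentially the same route as the paper's proof: $f \asymp g_{\G_+(f)}$ from Lemma \ref{Lem4.1.3}, the identity (\ref{4.21}) for the distance on $U(\rho)$, the upper bound in (ii) from $|x_j|\ge |x_{i_\rho}|$ for $\rho(j)\ge\rho(i_\rho)$, a degenerate test set for the lower bound, and the covering $\RR^n=\bigcup_\rho U(\rho)$ for (iii); for (i) the paper carries out the contradiction you only gesture at, namely choosing one index $i(\alpha^*)$ with $\rho(i(\alpha^*))<\rho(i_\rho)$ from each $I(\alpha^*)$, forming $J^*=\{i(\alpha^*)\}$, and noting that $d(\x,T_{J^*}(\RR^n))<|x_{i_\rho}|$ on $U(\rho)$ contradicts (\ref{4.21}). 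The one point to fix in your Step 2 is that on the test arc the coordinates indexed by $\{i\in I(f):\rho(i)<\rho(i_\rho)\}$ and by $I(g_{\G_+(f)})\setminus I(f)$ must be set to zero (as in the paper's set $X\cap Y$) or at worst to a sufficiently high power of $t$ --- your alternative of a ``fixed small nonzero constant'' would leave the monomials $\x^\alpha$ with $\alpha\notin V_{\G_+(f)}(\rho)$ of possibly lower order in $t$ and spoil the estimate $f\asymp t^{|\alpha^0|}$.
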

\begin{proof}$ \ $

\noindent
{\it Proof of (i):} By contradiction, assume that $V_{\G_+(f)}(\rho)= \emptyset$ for some $\rho \in S_{I(f)}.$ Then it is not difficult to show that for any $\alpha^* \in V_{\hat{\G}_+(f)}$ there exists $i(\alpha^*) \in I(\alpha^*)$ such that 
$$\rho(i(\alpha^*)) < \rho(i_\rho).$$

Put
$$J^*:=\{ i_{\alpha^*}: \alpha^* \in V_{\hat{\G}_+(f)}\}.$$
We have then
$$d(\x, T_{J^*}(\RR^n)) = \max\{|\x_{i(\alpha^*)}| : \alpha^* \in V_{\hat{\G}_+(f)}\}$$
for all $\x \in U(\rho).$

The above inequality show implies that, for $\x \in U(\rho)$
$$d(\x, T_{J^*}(\RR^n))< |x_{i_\rho}|,$$
which contradicts to (\ref{4.21}). Thus, (i) is proved.\\

\noindent
{\it Proof of (ii):} Let $\alpha = (\alpha_1, ..., \alpha_n) \in V_{\G_+(f)}$. Then, as in the proof of Lemma \ref{Lem4.1.3}, $\alpha_i$ is even number for any $i=1,...,n.$ We have
$$g_{\G_+(f)}(\x) = \sum_{\alpha \in V_{\G_+(f)}} |\x^\alpha| = \sum_{\alpha \in V_{\G_+(f)}} \x^\alpha.$$
Hence
$$g_{\G_+(f)}(\x) \ge \sum_{\alpha \in V_{\G_+(f)}(\rho)} \x^\alpha.$$

It is not difficult to see that, if $\x \in U(\rho)$ then
$$g_{\G_+(f)}(\x)  \ge \sum_{\alpha \in V_{\G_+(f)}(\rho)}|x_{i_\rho}|^{|\alpha|}.$$
Hence, 
\begin{equation}\label{4.22}g_{\G_+(f)}(\x)  \ge |x_{i_\rho}|^{|\alpha^0|},
\end{equation}
where $\alpha^0$ denotes the element of $V_{\G_+(f)}(\rho)$ such that 
$$|\alpha^0| = \min\{ |\alpha|: \alpha \in V_{\G_+(f)}(\rho)\}.$$

Now, by Lemma \ref{Lem4.1.3}, we have
$$f(\x) \asymp g_{\G_+(f)}(\x).$$
This , together with (\ref{4.21}) and (\ref{4.22}) show that
$$\Lc_\rho(f) \le |\alpha^0| = \min\{ |\alpha|: \alpha \in V_{\G_+(f)}(\rho)\}.$$

Now we will prove that $\Lc_\rho(f) \ge \min\{ |\alpha|: \alpha \in V_{\G_+(f)}(\rho)\}.$

Put $\displaystyle I(g_{\G_+(f)}) = \bigcup_{\alpha \in V_{\G_+(f)}} I(\alpha)$

$$J(\rho):= \{ i\in I(f): \rho(i) < \rho(i_\rho)\}.\qquad \qquad \qquad \qquad \qquad \qquad \qquad \quad$$
$$X:= \left\{ (x_1, ..., x_n) \in U(\rho): x_j = 0 \ \forall j \in J(\rho) \cup (I(g_{\G_+(f)}) \setminus I(f))\right\}.\quad $$
$$Y:= \left\{ (x_1, ..., x_n) \in U(\rho): x_j = x_{_{i_\rho}} \ \forall j \in I(f) \mbox{ such that } \rho(i) \ge \rho(i_\rho)\right\}.$$
Restricting $g_{\G_+(f)}(\x)$ on the set $X\cap Y$, we see that
$$g_{\G_+(f)}(\x) \Big|_{X\cap Y} \asymp |x_{i_\rho}|^{|\alpha^0|}.$$
This shows that 
$$\Lc_\rho(f) \ge |\alpha^0| = \min \{|\alpha|:\alpha \in V_{\G_+(f)}(\rho)\},$$
and (ii) is proved.\\

\noindent
The proof of (iii) is trivial, since 
$$\displaystyle \RR^n = \bigcup_{\rho \in S_{I(f)}}U(\rho).$$

The theorem is proved.
\end{proof}

\subsection{A consequence of Theorems \ref{Thm4.2.3}, \ref{Thm4.3.2} and \ref{Theo4.4.2}}

In this part, we combine the results of \S 4.2. -- \S 4.4. and introduce a (very special) class of smooth functions, all the \L ojasiewicz exponents of which can be computed explicitly and simply. 

Let us recall the definition of partially convenientness. A function $f$ is partially convenient, if there exist $I \subset \{1, ..., n\}$ and positive integers $\nu_i,$ $i \in I$ such that
\begin{itemize}
\item[(a)] $\{\nu_i \e_i, i \in I\} \subset V_{\G_+(f)}$ and
\item[(b)] $I(f) =I.$
\end{itemize}
If $f$ is partially convenient, then let $\nu(f)$ denote the maximum of $\nu_i,$ $i \in I.$
\begin{Theorem}\label{Theo4.5.1} Let $f$ be a smooth function on a neighbourhood of the origin, $f(0)=0,$ and $\nabla f(0) =0.$ Assume that $f$ is non-negative, non-degenerate, partially convenient and satisfies the KN-condition. Then we have
\begin{itemize}
\item[(i)] $\theta(f) = 1-\displaystyle \frac{1}{\nu(f)}$;
\item[(ii)] $\alpha(f) = \nu(f);$
\item[(iii)] $\Lc(f) = \nu(f).$
\end{itemize}
\end{Theorem}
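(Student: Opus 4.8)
The plan is to derive Theorem~\ref{Theo4.5.1} as a direct corollary of Theorems~\ref{Thm4.2.3}, \ref{Thm4.3.2} and~\ref{Theo4.4.2}, so the whole proof amounts to checking that, under the extra hypothesis of partial convenientness, the combinatorial invariants $d(f)$ and $\max_\rho \Lc_\rho(f)$ both collapse to $\nu(f)$. Part~(i) is immediate: the hypotheses of Theorem~\ref{Thm4.2.3} are satisfied verbatim, so $\theta(f) = 1 - 1/\nu(f)$ with no further work.

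For part~(ii) I would invoke Theorem~\ref{Thm4.3.2} to get $\alpha(f) = d(f)$ and then show $d(f) = \nu(f)$. Here the partial convenientness condition~(b), namely $I(f) = I$, is the crucial simplification: it forces every vertex $\alpha$ of $\G_+(f)$ to be supported on coordinates in $I$, and condition~(a) puts each $\nu_i \e_i$ ($i \in I$) among the vertices. First I would argue that the one-element index sets $I(\nu_i\e_i) = \{i\}$ are precisely the vertices of $\hat\G_+(f)$, i.e. $V_{\hat\G_+(f)} = \{\e_i : i \in I\}$ (no vertex of $\G_+(f)$ can have singleton support $\{j\}$ with $j \notin I$, by~(b), and conversely the $\e_i$, $i\in I$, are vertices of $\hat\G_+(f)$ because $\nu_i\e_i$ is a vertex of $\G_+(f)$). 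For $\alpha^* = \e_i$ the diagonal ray $\Delta(\e_i)$ is just the $x_i$-axis, and since $\nu_i\e_i \in \G_+(f)$ is a vertex (so the boundary meets the axis exactly at height $\nu_i$), one gets $D(\e_i) = \nu_i\e_i$ and $d(\e_i) = \nu_i$. Taking the maximum over $i \in I$ gives $d(f) = \max_i \nu_i = \nu(f)$, hence $\alpha(f) = \nu(f)$.

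For part~(iii) I would use Theorem~\ref{Theo4.4.2}: $\Lc(f) = \max_{\rho \in S_{I(f)}} \Lc_\rho(f)$ and $\Lc_\rho(f) = \min\{|\alpha| : \alpha \in V_{\G_+(f)}(\rho)\}$. Since $I(f) = I$ and $V_{\hat\G_+(f)} = \{\e_i : i\in I\}$, a subset $J$ of $I$ satisfies the condition~$(*)$ iff it meets each $I(\e_i) = \{i\}$ in exactly one point, which forces $J = I$ itself; so $\Lambda = \{I\}$, $i_\rho$ is the unique element of $I$ with $\rho(i_\rho) = \min_{i\in I}\rho(i)$, and $V_{\G_+(f)}(\rho)$ consists of those vertices $\alpha$ supported entirely on $\{i : \rho(i) \ge \rho(i_\rho)\}$. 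The vertex $\nu_{i_\rho}\e_{i_\rho}$ always lies in this set (its support is $\{i_\rho\}$, and $\rho(i_\rho) \ge \rho(i_\rho)$), with $|\nu_{i_\rho}\e_{i_\rho}| = \nu_{i_\rho}$. I would then argue the minimum $|\alpha|$ over $V_{\G_+(f)}(\rho)$ equals $\nu_{i_\rho}$: any vertex $\alpha$ in $V_{\G_+(f)}(\rho)$ is supported on indices $i$ with $\rho(i)\ge\rho(i_\rho)$, and by convexity of $\G_+(f)$ together with the fact that the $\nu_i\e_i$ are among the vertices, one shows $|\alpha| \ge \min\{\nu_i : i \in I,\ \rho(i)\ge\rho(i_\rho)\} \ge \nu_{i_\rho}$ is \emph{not} automatic --- rather, one shows $\Lc_\rho(f) = \min\{\nu_i : \rho(i) \ge \rho(i_\rho)\}$, and choosing $\rho$ so that $i_\rho$ is the index realising $\nu(f) = \max_i \nu_i$ (put that index last in the $\rho$-order) makes every surviving $\nu_i$ at least $\nu(f)$, forcing $\Lc_\rho(f) = \nu(f)$; for every other $\rho$ one has $\Lc_\rho(f) \le \nu(f)$ since $\nu_{i_\rho}\e_{i_\rho}$ is available. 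Hence $\max_\rho \Lc_\rho(f) = \nu(f)$, so $\Lc(f) = \nu(f)$.

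The main obstacle is the last computation in~(iii): pinning down $\min\{|\alpha| : \alpha \in V_{\G_+(f)}(\rho)\}$ requires a clean argument that among the vertices supported on a prescribed ``upper'' block of indices, the axial vertices $\nu_i\e_i$ minimise $|\alpha|$, which uses convexity of the Newton polyhedron and the hypothesis that these axial points are genuinely vertices. Once one has the identity $\Lc_\rho(f) = \min\{\nu_i : i\in I,\ \rho(i)\ge\rho(i_\rho)\}$, maximising over $\rho$ is routine. Everything else is bookkeeping with the definitions of $\hat\G_+(f)$, $d(\alpha^*)$, and condition~$(*)$ under the normalisation $I(f) = I$.
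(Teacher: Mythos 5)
Parts (i) and (ii) of your proposal coincide with the paper's own proof: (i) is read off from Theorem \ref{Thm4.2.3}, and for (ii) the paper makes exactly your identifications $V_{\hat{\G}_+(f)}=\{\e_i : i\in I\}$, $d(\e_i)=\nu_i$, $d(f)=\nu(f)$ before invoking Theorem \ref{Thm4.3.2}.

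For (iii) you take a genuinely different route. The paper does not evaluate the formula of Theorem \ref{Theo4.4.2} exactly: it only uses the crude bound $\Lc(f)\le\max\{|\alpha| : \alpha\in V_{\G_+(f)}\}$ together with the fact that partial convenientness forces $\max\{|\alpha|\}=\nu(f)$ (if a vertex $\alpha$ supported on $I$ is not one of the $\nu_i\e_i$, then $\sum_{i}\alpha_i/\nu_i<1$, whence $|\alpha|<\nu(f)$ --- a point worth spelling out, since the paper asserts it without proof), and it obtains the lower bound $\Lc(f)\ge\nu(f)$ directly by restricting $f\asymp g_{\G_+(f)}$ to the $x_{i_0}$-axis with $\nu_{i_0}=\nu(f)$, where $f\asymp x_{i_0}^{\nu(f)}$ and $\dist(\x,f^{-1}(0))=|x_{i_0}|$. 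Your plan of computing $\Lambda$, $i_\rho$ and $V_{\G_+(f)}(\rho)$ explicitly is viable and in fact cleaner than you make it, but you have misidentified $i_\rho$: since $\Lambda=\{I\}$, the minimum in its definition is over a singleton, so $i_\rho=i(I,\rho)$ is the element of $I$ whose coordinate is \emph{largest} on $U(\rho)$ (the last one in the $\rho$-ordering, consistent with $d(\x,T_I(\RR^n))=\max_{i\in I}|x_i|$), not the smallest as you state. With the correct reading, the condition $\rho(i)\ge\rho(i_\rho)$ for all $i\in I(\alpha)$ forces $I(\alpha)=\{i_\rho\}$, so $V_{\G_+(f)}(\rho)=\{\nu_{i_\rho}\e_{i_\rho}\}$ is a singleton, $\Lc_\rho(f)=\nu_{i_\rho}$, and maximizing over $\rho$ (every $i\in I$ is $i_\rho$ for the ordering placing it last) gives $\nu(f)$ at once. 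In particular, the ``main obstacle'' you flag --- showing that axial vertices minimise $|\alpha|$ over an upper block --- is an artefact of the misreading and needs no convexity argument; conversely, if one really took $i_\rho$ to be the smallest coordinate, the formula would return $\min\{|\alpha| : \alpha\in V_{\G_+(f)}\}\le\min_i\nu_i$, which is the wrong answer. Fix the direction of $i_\rho$ (your own phrase ``put that index last'' already betrays the correct one) and your argument closes.
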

\begin{proof}
(i) follows from Theorem \ref{Thm4.2.3}.

\noindent
{\it Proof of (ii):} Let $I \subset \{1, ..., n\}$ and $\{\nu_i : i \in I\}$ be as in the definition of partially convenientness. Then, since $\nu_i \e_i \in \G_+(f),$ $\forall i \in I$ and $I(f) =I,$ it is easy to see that 
$$V_{\hat{\G}_+(f)}= \{\e_i , i \in I\}$$
$$\nu_i = d(\e_i)$$
$$\mbox{and } \nu(f) = d(f).$$
Hence, since $f$ is non-negative, non-degenerate and satisfies the KN-condition, $\alpha(f) = d(f) = \nu(f)$ by Theorem \ref{Thm4.3.2}. Thus (ii) is true.

\noindent
{\it Proof of (iii):} It follows from Theorem \ref{Theo4.4.2} that
$$\Lc(f) \le \max\{|\alpha|: \alpha \in V_{\G_+(f)}\}.$$
Since $f$ is partially convenient, we have
$$\max\{|\alpha|: \alpha \in V_{\G_+(f)}\} = \nu(f).$$
Hence, $\Lc(f) \le \nu(f).$

Since $V_{\hat{\G}_+(f)}=\{ \e_i, i \in I\},$
$$(f^{-1}(0),0) = (T_I(\RR^n),0),$$
where $T_I(\RR^n) =\{ \x=(x_1, ..., x_n) \in \RR^n: x_i =0 \ \forall i \in I\}.$

Let $\nu(f) = \nu_{i_0}$, $i_0 \in I.$

Restricting $g$ and $g_{\G_+(f)}$ on the set
$$X= \{ \x = (x_1, ..., x_n) \in \RR^n: x_i =0 \ \forall i \in I \mbox{ and } i \ne i_0\}.$$
We see that
$$f\Big|_X \asymp g\Big|_X \asymp x_{i_0}^{\nu(f)}$$
and 
$$\dist(\x, f^{-1}(0)) = \dist (\x, T_I(\RR^n)) = |x_{i_0}|$$
for $\x \in X.$

This shows that $\Lc(f) \ge \nu(f)$ and therefore, (iii) is proved.
\end{proof}
\section{The case of convex functions}
We begin this section with the following
\begin{LemmaS}\label{Lem5.1} Let $f$ be a smooth convex function on a neighbourhood of the origin, $f(0) =0,$ and $\nabla f(0)=0.$ Assume that $f$ satisfies the KN-condition, then there exist $I \subset\{1,...,n\}$ and positive even numbers $\nu_i$, $i \in I$ such that 
$$V_{\G_+(f)} = \{\nu_i \e_i : i \in I\}.$$
\end{LemmaS}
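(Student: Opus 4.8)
The plan is to extract strong structural consequences from convexity of $f$ combined with the KN-condition, and to show these force $\G_+(f)$ to have only vertices lying on the coordinate axes. First I would observe that, since $f$ is convex, $f(0)=0$ and $\nabla f(0)=0$, the origin is a global minimum near $0$, so $f$ is non-negative; moreover for each coordinate direction $\e_i$ the one-variable restriction $t\mapsto f(t\e_i)$ is a smooth convex function vanishing to order $\ge 2$ at $t=0$ (if it vanished identically along an axis the Newton polyhedron would miss that axis, which is allowed, so one must keep track of which axes are actually ``used''). The key point is to understand the \emph{vertices} of $\G_+(f)$: a vertex is a lattice point $\alpha$ that is not in the convex hull of the other exponents of the support, and I want to show any vertex $\alpha$ of $\G_+(f)$ must be of the form $\nu\e_i$.

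The main step is a mixed-partials argument. Suppose $\alpha=(\alpha_1,\dots,\alpha_n)$ is a vertex of $\G_+(f)$ with $|I(\alpha)|\ge 2$, say $i,j\in I(\alpha)$. Since $f$ satisfies the KN-condition, by Proposition \ref{Prop2.1.2} we may write $f=\sum_{\beta\in S}\x^\beta\varphi_\beta$ with $S$ finite in $\G_+(f)\cap\ZZ^n_+$; because $\alpha$ is a vertex it cannot be cancelled or dominated, so the coefficient $c_\alpha=\frac{1}{\alpha!}\partial^\alpha f(0)$ is nonzero and $\alpha\in S$. Now restrict $f$ to a suitable two-dimensional coordinate plane or, better, to a curve of the form $x_i=t^{a_i}$, $x_k=\varepsilon_k t^{a_k}$ chosen so that $\langle a,\cdot\rangle$ is minimized exactly on the face through $\alpha$: convexity of $f$ restricted to a line segment through such a point, together with the sign information forced by non-negativity (as in Lemma \ref{Lem4.1.3}, the exponents $l(\a^i(\sigma))$ must be even), should force a contradiction unless $\alpha$ is supported on a single axis. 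Concretely, if $\alpha_i,\alpha_j>0$ one looks at the behaviour of $f$ along the two curves $x\mapsto(\dots,t,\dots,\pm t,\dots)$ in the $(i,j)$-plane with all other coordinates zero: convexity along the line joining the $+$ and $-$ points forces the leading form $f_\gamma$ of the corresponding face to be convex as a function of those two variables, but a convex polynomial that is a sum of monomials each genuinely involving both $x_i$ and $x_j$, vanishing on the axes to high order, violates convexity at a generic boundary point — this is where non-degeneracy (no critical point of $f_\gamma$ on the torus) will be used to rule out the degenerate possibilities.

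Having shown every vertex has the form $\nu\e_i$, I would let $I$ be the set of $i$ for which such a vertex occurs, and let $\nu_i$ be the (unique) value with $\nu_i\e_i\in V_{\G_+(f)}$; uniqueness holds because two distinct multiples of the same $\e_i$ cannot both be vertices of a polyhedron of the form $\G_+(f)$. Then $V_{\G_+(f)}=\{\nu_i\e_i:i\in I\}$, and evenness of each $\nu_i$ follows exactly as in the proof of Lemma \ref{Lem4.1.3}: if $\nu_i$ were odd, then $f(t\e_i)\asymp t^{\nu_i}$ would change sign at $t=0$, contradicting $f\ge 0$. This last part is routine; the genuine obstacle is the middle step, namely ruling out ``mixed'' vertices, where convexity has to be leveraged in a quantitative way against the Newton-polyhedron combinatorics — I expect one must combine the second-derivative criterion for convexity (positive semidefiniteness of the Hessian) with the toric picture of \S 2, evaluating the Hessian of $f$ along a one-parameter family $\pi_\sigma(\y)$ approaching a point of $\pi_\sigma^{-1}(0)$ and reading off that a mixed monomial in a vertex face would make the Hessian indefinite.
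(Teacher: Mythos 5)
Your overall strategy is the same as the paper's: use the KN-condition to split off the vertex monomial, $f=c\x^{\alpha}+\sum_{\rho'\in S'}\x^{\rho'}\varphi_{\rho'}$ with $c\neq 0$, argue that convexity of $f$ forces $c\x^{\alpha}$ itself to be convex, and note that the only convex monomials are $cx_i^{\nu}$ with $c>0$ and $\nu$ even. The uniqueness and evenness parts of your sketch are fine. The problem is that the step you yourself call ``the genuine obstacle'' is exactly the step you never carry out, and the concrete routes you offer for it do not work. Restricting to the $(i,j)$-coordinate plane (all other variables zero) gives nothing when $\alpha$ has support outside $\{i,j\}$, since $\x^{\alpha}$ then vanishes identically there; and the assertion that ``convexity along the line joining the $+$ and $-$ points forces the leading form $f_\gamma$ to be convex'' is precisely the claim requiring proof --- one cannot pass convexity from $f$ to a weighted leading form without controlling the remainder. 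Your plan also leans on non-degeneracy, which is not a hypothesis of this lemma (it enters only in Theorem \ref{Thm5.2}) and is not used in the paper's proof; needing it is a sign the reduction has not actually been found.

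What closes the gap is a quantitative weighted-scaling estimate on the Hessian, which your final sentence gestures at but does not perform. Since $\alpha$ is a vertex, there is $\a\in\ZZ^n_+$ with $\langle\a,\alpha\rangle=l$ and $\langle\a,\beta\rangle>l$ for every other $\beta\in\G_+(f)$. If $P=c\x^{\alpha}$ were not convex, pick $\x_0$ and $\v^0$ with $\sum_{i,j}\frac{\partial^2 P(\x_0)}{\partial x_i\partial x_j}v^0_iv^0_j<0$, and test the Hessian of $f$ at $\x_0(t)=(t^{a_1}x_{01},\dots,t^{a_n}x_{0n})$ against the scaled vector $(t^{a_1}v^0_1,\dots,t^{a_n}v^0_n)$: by weighted homogeneity the $P$-contribution is exactly $t^{l}$ times the negative number above, while the contribution of $\sum\x^{\rho'}\varphi_{\rho'}$ is $o(t^{l})$ because $\langle\a,\rho'\rangle>l$. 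For small $t>0$ the quadratic form is negative, contradicting convexity of $f$. With this reduction in hand, ruling out mixed vertices is elementary and needs no non-degeneracy: for $a,b\geq 1$ the monomial $x_i^{a}x_j^{b}$ has $2\times 2$ Hessian determinant $x_i^{2a-2}x_j^{2b-2}\,ab\,(1-a-b)<0$ off the axes, so it is never convex. Without the scaling estimate (or an equivalent), your argument is incomplete at its central point.
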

\begin{proof}
Since $f$ satisfies the KN-condition, it can be written in the form
$$f= \sum_{\rho \in S}\x^\rho \varphi_\rho$$
where $S$ is a finite subset of $\G_+(f) \cap \ZZ^n_+,$ and $\varphi_\rho \in \E_n,$ $\forall \rho \in S.$

Now, let $\alpha \in V_{\G_+(f)}.$ Then, clearly $\alpha \in S$ and $\varphi_\alpha(0) \ne 0.$ We can rewrite $f$ in the form
$$f= c\x^\alpha + \sum_{\rho' \in S'} \x^{\rho'} \varphi_{\rho'}$$
where $c \ne 0,$ $S'$ is a finite subset of $\G_+(f) \cap \ZZ^n_+$ and $\varphi_\alpha(0) \in \E_n, \ \forall \rho'\in S'.$

Put $P(\x) : = c\x^\alpha$
$$h(\x): = \sum_{\rho' \in S'} \x^{\rho'} \varphi_{\rho'}.$$
We will prove that $P(\x)$ is a convex function. By contradiction, assume that it is not the case, then there exist $\x_0 =(x_{01}, ..., x_{0n}) \in \RR^n$ and $\v^0 = (v^0_1,..., v^0_n) \in \RR^n$,  $||\v^0||=1$ such that
\begin{equation}\label{5.1}
\sum_{i,j=1}^n \frac{\partial^2 P(\x_0)}{\partial x_i \partial x_j}v^0_i v^0_j <0.
\end{equation}

Since $\alpha \in V_{\G_+(f)},$ there exist $\a=(a_1,..., a_n) \in \ZZ^n_+$ and $l \in \ZZ_+$ such that 
$$\langle \a, \alpha\rangle = l$$
and 
$$\langle \a, \x \rangle > l\ \ \forall \x \in \G_+(f), \x \ne \alpha.$$
Put $\x_0(t) = (t^{a_1}x_{01}, ..., t^{a_n}x_{0n}).$

Then 
$$\displaystyle  \sum_{i,j=1}^n \frac{\partial^2 P(\x_0(t))}{\partial x_i \partial x_j}(t^{a_i}v^0_i)(t^{a_j} v^0_j) = t^l\left(\sum_{i,j=1}^n \frac{\partial^2 P(\x_0)}{\partial x_i \partial x_j}v^0_i v^0_j\right)$$
and
$$\displaystyle  \sum_{i,j=1}^n \frac{\partial^2 h(\x_0(t))}{\partial x_i \partial x_j}(t^{a_i}v^0_i)(t^{a_j} v^0_j) =o(t^l).$$
This follows that
$$\displaystyle  \sum_{i,j=1}^n \frac{\partial^2 f(\x_0(t))}{\partial x_i \partial x_j}(t^{a_i}v^0_i)(t^{a_j} v^0_j) \asymp t^l\left(\sum_{i,j=1}^n \frac{\partial^2 P(\x_0)}{\partial x_i \partial x_j}v^0_i v^0_j\right).$$
The left hand side is non-negative, since $f$ is convex, while the right hand side can be negative ( because of (\ref{5.1})) for $t$ sufficiently close to zero. The contradiction shows that 
$P(\x) = c\x^\alpha$ is a convex function.

Therefore, $P(\x) = c\x^\alpha$ must have the form 
$$P(\x) = c \x^{\nu_i \e_i},$$
for some $c>0,$ $\nu_i >0$ and even. Then, the lemma is proved. 
\end{proof}
\begin{TheoremS}\label{Thm5.2} Let $f$ be a smooth convex function on a neighbourhood of the origin, $f(0) =0,$ $\nabla f(0) =0$. Assume that $f$ is non-degenerate and satisfies the KN-condition. Then we have
\begin{itemize}
\item[(i)] $\theta(f) = 1- \displaystyle \frac{1}{\nu(f)};$
\item[(ii)] $\alpha(f) = \nu(f);$
\item[(iii)] $\Lc(f) = \nu(f).$
\end{itemize} 
\end{TheoremS}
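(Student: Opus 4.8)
The plan is to show that a smooth convex function satisfying the KN-condition (and the standing hypotheses $f(0)=0$, $\nabla f(0)=0$, non-degeneracy) automatically satisfies \emph{all} the hypotheses of Theorem~\ref{Theo4.5.1}, namely non-negativity and partial convenientness. Once this is done, the three equalities are immediate consequences of Theorem~\ref{Theo4.5.1}. So the real content is a reduction, and the engine for it is Lemma~\ref{Lem5.1}, which has just been established.

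First I would dispose of non-negativity. Since $f$ is convex on a neighbourhood of the origin and $\nabla f(0)=0$, the origin is a global minimum of $f$ on that neighbourhood (a convex function with a vanishing gradient at an interior point attains its minimum there); combined with $f(0)=0$ this gives $f(\x)\ge 0$ for all $\x$ close to the origin. Next I would invoke Lemma~\ref{Lem5.1}: there exist $I\subset\{1,\dots,n\}$ and positive even integers $\nu_i$, $i\in I$, with $V_{\G_+(f)}=\{\nu_i\e_i:i\in I\}$. From this description one reads off at once that $f$ is partially convenient in the sense of Definition~\ref{Def4.2.1}: condition~(i) holds because every vertex $\nu_i\e_i$ lies in $V_{\G_+(f)}$, and condition~(ii) holds because each vertex $\nu_i\e_i$ has support $\{i\}\subset I$, so any vertex $\alpha=(\alpha_1,\dots,\alpha_n)$ satisfies $\alpha_j=0$ for $j\notin I$. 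Moreover $I(f)=\bigcup_{\alpha\in V_{\G_+(f)}}I(\alpha)=\bigcup_{i\in I}\{i\}=I$, which is exactly condition~(b) in the restated definition of partial convenientness at the start of \S4.5. Finally $\nu(f)=\max\{\nu_i:i\in I\}=\max\{\nu_i:\nu_i\e_i\in V_{\G_+(f)}\}$, so the quantity $\nu(f)$ appearing in Theorem~\ref{Theo4.5.1} agrees with the one in the statement here.

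Having checked all the hypotheses, I would conclude: $f$ is non-negative, non-degenerate, partially convenient and satisfies the KN-condition, so Theorem~\ref{Theo4.5.1} applies verbatim and yields
$$\theta(f)=1-\frac{1}{\nu(f)},\qquad \alpha(f)=\nu(f),\qquad \Lc(f)=\nu(f),$$
which are precisely (i), (ii), (iii).

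I do not expect a serious obstacle here; the statement is essentially a corollary once Lemma~\ref{Lem5.1} is in hand. The one point requiring a little care is the non-negativity claim: one should be sure that convexity on a (possibly small) neighbourhood together with $\nabla f(0)=0$ really forces $f\ge 0$ there, which is the standard first-order characterization of minimizers of convex functions, $f(\x)\ge f(0)+\langle\nabla f(0),\x\rangle=0$, valid for $\x$ in the convex neighbourhood on which $f$ is convex. Everything else is bookkeeping about supports of vertices of $\G_+(f)$.
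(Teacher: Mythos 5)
Your proposal is correct and follows exactly the paper's own argument: non-negativity from convexity together with $\nabla f(0)=0$, partial convenientness from the vertex description in Lemma~\ref{Lem5.1}, and then a direct application of Theorem~\ref{Theo4.5.1}. You simply spell out the verification of the hypotheses in more detail than the paper does, which is harmless.
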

\begin{proof}
Since $f$ is convex, $f(0) =0,$ $\nabla f(0) =0$, then $f$ is non-negative in a neighbourhood of the origin. Hence, the theorem follows from Lemma \ref{Lem5.1} and Theorem~\ref{Theo4.5.1}.
\end{proof}

We finish the paper with the following
\begin{RemarkS}
\begin{itemize}
\item[\upshape(i)]\emph{ All the results of Sections 4--5 are still true, if we replace ``$f$ is a smooth function satisfying the KN-condition" by ``$f$ is an analytic function", or ``$f$ belongs to the Denjoy-Carleman quasi-analytic classes", or ``$f$ is convenient".}
\item[\upshape(ii)]\emph{In our knowledge, the formulas for $\theta(f)$ and $\alpha(f)$, given respectively in Theorem \ref{Thm4.2.3} and Theorem \ref{Thm4.3.2} are new even for the case of analytic functions. The exponent $\Lc(f)$, where $f$ is a non-negative, non-degenerate analytic function, was computed firstly in \cite{BP}. Our method of computation, given in Therem \ref{Theo4.4.2}, is different from that of \cite{BP}.}
\end{itemize}
\end{RemarkS}


\begin{thebibliography}{1}
\bibitem[BDLM]{BDLM} J. Bolte, A. Daniilidis, O. Ley, L. Mazet, {\it Characterizations of \L ojasiewicz inequalities: subgradient flows, talweg, convexity}, Transactions of the AMS {\bf } 362 (2010), no.6, 3319--3363.
\bibitem[BL]{BL} J. Bochnak, S. \L ojasiewicz, {\it A converse of the Kuiper-Kuo theorem}, Proc. of Liverpool Singularities Symposium, Lecture Notes in Math. {\bf 192} (1971), 254--261.
\bibitem[BM]{BM} E. Bierstone, P.D. Milman, {\it Canonical desingularization in characteristic zero by blowing up the maximum strata of a local invariant}, Invent. Math. {\bf 128} (1997), 207--302.
\bibitem[BP]{BP} N.T.N. Bui, T.S. Pham, {\it Computation of the \L ojasiewicz exponent of the nonnegative and nondegenerate analytic functions}, Internat. J. Math. {\bf 25} (2014), no.10.1450092, 13pp.
\bibitem[Br]{Br}D. Brownawell, {\it Bounds for the degrees in the Nullstellensatz}, Annals of Math.{\bf 126} (1987), 577--592.
\bibitem[Ch]{Ch} J.R. Chill, {\it On the \L ojasiewicz--Simon gradient inequality}, J. Functional Analysis {\bf 201} (2003), 572--601.
\bibitem[CKT]{CKT} E. Cygan, T. Krasinski, P. Tworzewski, {\it Separation of algebraic sets and the \L ojasiewicz exponent of polynomial mappings}, Inv. Math. {\bf 136} (1999), 75--87.
\bibitem[F]{F}P. Feehan, {\it Resolution of singularities and geometric proofs of the \L ojasiewicz inequalities}, Geom. Topol. {\bf 23} no.7 (2019), 3273--3313.
\bibitem[FY]{FY} T. Fukui, E. Yoshinaga, {\it The modified analytic trivialization of family of real analytic functions}, Invent Math {\bf 82} (1985), 467--477.
\bibitem[Gw]{Gw} J. Gwozdziewicz, {\it The \L ojasiewicz exponent of an analytic function at an isolated zero},Comment. Math. Helv. {\bf 74} (1999), 364--375.
\bibitem[Ha1]{Ha} H.V. Ha, {\it Nombres de \L ojasiewicz et singularit\'es \`a l'infini des polyn\^omes de deux variables complexes}, C.R. Acad. Sci. Paris, S\'erie 1, {\bf 311} (1990), pp. 429--432.
\bibitem[Ha2]{Ha2} H.V. Ha, {\it Computation of the \L ojasiewicz exponent for a germ of a smooth function in two variables}, Studia Math. {\bf 240} no.2 (2018), 243--263. 
\bibitem[HNP] {HNP} H.V. Ha, H.V. Ngai, T.S. Pham, {\it A global smooth version of the classical \L ojasiewicz inequality}, J. Math. Anal. Appl. {\bf 421} (2015), 1559--1572.
\bibitem[Ho]{Ho} L. H\"ormander, {\it On the division of distributions by polynomials},  Ark. Mat. {\bf 3} (1958), 555--568. 
\bibitem[KMP]{KMP} K. Kurdyka, T. Mostowski, A. Parusiński, {\it Proof of the gradient conjecture of R. Thom}, Ann. of Math. {\bf 152} (2000), 763--792.
\bibitem[KN]{KN} J. Kamimoto, T. Nose, {\it Toric resolution of singularities in certain class of $\mathcal{C}^\infty$ functions and asymptotic analysis of oscillatory integrals}, J. Math. Sci. Univ. Tokyo {\bf 23} No. 2 (2016), 425--485.
\bibitem[Kou]{Kou} A.G. Kouchnirenko, {\it Polyhedra de Newton et nombres de Milnor}, Invent. Math {\bf 32} (1976), 1--31. 
\bibitem[KS]{KS} K. Kurdyka, S. Spodzieja, {\it Separation of real algebraic sets and the \L ojasiewicz exponent}, Proceedings of the AMS {\bf 142} no.9 (2014), 3089--3102.
\bibitem[Kuo]{Kuo} T. C. Kuo, {\it Computation of \L ojasiewicz exponent of $f(x, y)$}, Comment. Math. Helv. {\bf 49} (1974), 201--213.
\bibitem[Kur]{Kur} K. Kurdyka, {\it On gradients of functions definable in o-minimal structure}, Ann. Inst. Fourier (Grenoble) {\bf 48} (1998), 769--783.
\bibitem[\L 1]{L1} S. \L ojasiewicz, {\it Sur le probleme de la division}, Studia Math. {\bf 18} (1959), 87--136.
\bibitem[\L 2]{L2} S. \L ojasiewicz, {\it Ensembles semi-analytiques}, Publ. Math. I.H.E.S. (1965).
\bibitem[Var]{Var} A.N.Varchenko, {\it Newton polyhedra and estimation of oscillating integrals}, Functional Anal. Appl., {\bf 10--3} (1976), 175--196.
\bibitem[Y]{Y} E. Yoshinaga, {\it Topologycally principal part of analytic functions}, Transactions of the AMS. Vol. {\bf 314}, No.2 (1989), 663--814.
\end{thebibliography}
\end{document}